\newcommand{\apref}[3]{\hyperref[#2]{#1\ref*{#2}#3}}
\theoremstyle{plain}
\newtheorem{prop}{Proposition}[section]
\newtheorem{lemma}[prop]{Lemma}
\newtheorem{thm}[prop]{Theorem}
\newtheorem*{thmAnn}{Theorem A}
\newtheorem*{thmBnn}{Theorem B}
\newtheorem*{corCnn}{Corollary C}
\theoremstyle{definition}
\theoremstyle{remark}
\newtheorem{remark}[prop]{Remark}
\newcommand{\setmid}{\,:\,}
\DeclareMathOperator{\arcosh}{arcosh}
\DeclareMathOperator{\Hyp}{Hyp}
\DeclareMathOperator{\nte}{\mathbf{e}}
\DeclareMathOperator{\End}{End}
\DeclareMathOperator{\Mat}{Mat}
\DeclareMathOperator{\GL}{GL}
\DeclareMathOperator{\SL}{SL}
\DeclareMathOperator{\PSL}{PSL}
\DeclareMathOperator{\tr}{tr}
\DeclareMathOperator{\Ima}{Im}
\DeclareMathOperator{\Rea}{Re}
\newcommand\N{\mathbb{N}}
\newcommand\R{\mathbb{R}}
\newcommand\Z{\mathbb{Z}}
\newcommand\C{\mathbb{C}}
\newcommand{\h}{\mathbb{H}}
\newcommand{\mc}[1]{\mathcal #1}
\newcommand{\mf}[1]{\mathfrak #1}
\newcommand{\wh}{\widehat}
\newcommand{\eps}{\varepsilon}
\DeclareMathOperator{\id}{id}
\DeclareMathOperator{\M}{M}
\newcommand{\sceq}{\coloneqq}
\newcommand{\bmat}[4]{\begin{bmatrix} #1&#2\\#3&#4\end{bmatrix}}
\newcommand{\textmat}[4]{\left(\begin{smallmatrix} #1&#2 \\ #3&#4
\end{smallmatrix}\right)}
\newcommand{\textbmat}[4]{\left[\begin{smallmatrix} #1&#2 \\ #3&#4
\end{smallmatrix}\right]}
\begin{document}

\title{Eisenstein series twisted with non-expanding cusp monodromies}
\author[K.\@ Fedosova]{Ksenia Fedosova}
\address{KF: Albert-Ludwigs-Universit\"at Freiburg, Mathematisches Institut, Ernst-Zermelo-Str. 1, 79104 Freiburg im Breisgau, Germany}
\email{ksenia.fedosova@math.uni-freiburg.de}
\author[A.\@ Pohl]{Anke Pohl}
\address{AP: University of Bremen, Department 3 -- Mathematics, Bibliothekstr.\@ 
5,  28359 Bremen, Germany}
\email{apohl@uni-bremen.de}
\subjclass[2010]{Primary: 11F03, Secondary: 30F35}
\keywords{Eisenstein series, non-unitary representation, non-expanding cusp monodromy}
\begin{abstract} 
Let $\Gamma$ be a geometrically finite Fuchsian group and suppose that $\chi\colon\Gamma\to\nobreak\GL(V)$ is a finite-dimensional representation with non-expanding cusp monodromy. We show that the parabolic Eisenstein series for $\Gamma$ with twist $\chi$ converges on some half-plane. Further, we develop Fourier-type expansions for these Eisenstein series. 
\end{abstract}
\maketitle

\section{Introduction}

Recently, interest emerged in developing Selberg-type trace formulas with \textit{non-unitary} twists for locally symmetric spaces and beyond. 

M\"uller \cite{Mueller_STF} established a Selberg trace formula with non-unitary twists for \textit{compact} locally symmetric spaces $\Gamma\backslash G/K$. We refer to \cite{Spilioti2015, Fedosova_nonunitary} for modified trace formulas and closely related Selberg zeta functions for certain spaces of rank $1$. Subsequently, Deitmar--Monheim \cite{Deitmar_Monheim_traceformula} and Deitmar \cite{Deitmar_locally_compact} provided Selberg-type trace formulas for certain \textit{compact} quotient spaces $\Gamma\backslash G$.

Motivated by the quest for a (still to be developed) Selberg-type trace formula with \textit{non-unitary} twists for \textit{noncompact} spaces, Deitmar and Monheim \cite{Deitmar_Monheim_eisenstein} (see also~\cite{Monheim}) specialized to $G=\PSL_2(\R)$ and investigated twisted (parabolic, non-holomorphic) Eisenstein series for nonuniform, cofinite Fuchsian groups $\Gamma$. 

To survey these results in more detail, let $\h$ denote the hyperbolic plane, let $\Gamma$ be a geometrically finite Fuchsian group and let $\chi\colon \Gamma\to\nobreak \GL(V)$ be a representation of $\Gamma$ on a finite-dimensional unitary space $V$. We refer to Section~\ref{sec:prelims} below for definitions of these notions.

Deitmar and Monheim  studied the $\chi$-twisted Eisenstein series $E_\chi$ for the case that~$\Gamma$  is \textit{cofinite} and $\chi$ is \textit{unitary at cusps}, i.\,e., for any parabolic element $p\in\Gamma$ the endomorphism $\chi(p)$ is unitary. In this case, the (parabolic, non-holomorphic) Eisenstein series~$E_{\mf c,\chi}$ at the cusp (or rather cusp representative) $\mf c$ is given by (initially only formally)
\[
 E_{\mf c,\chi}(z,s)\sceq \sum_{[g]\in\Gamma_{\mf c}\backslash\Gamma} \big(\Ima(\sigma_{\mf c}^{-1}g.z)\big)^s \chi(g^{-1})\circ P_{\mf c}   \colon V\to V
\]
where $z\in\h$, $s\in\C$ (or rather an appropriate subset of $\C$), $\Gamma_{\mf c}$ is the parabolic stabilizer group of $\mf c$ in $\Gamma$, $\sigma_{\mf c}\in\PSL_2(\R)$ is a scaling matrix at $\mf c$, and $P_{\mf c}\colon V\to V$ is the orthogonal projection onto the space $V_{\mf c}$ of elements fixed by $\chi(\Gamma_{\mf c})$. The $\chi$-twisted Eisenstein series $E_\chi$ is then (again initially only formally) given by
\begin{equation}\label{Eisenintro}
 E_\chi \sceq \sum_{\mf c\in \mf C} E_{\mf c,\chi}
\end{equation}
where $\mf C$ is a (complete, $\Gamma$-reduced) set of representatives for the cusps of $\Gamma$.

For example, for $\PSL_2(\Z)$, using the cusp representative~$\mf c=\infty$, the twisted Eisenstein series is 
\[
 E_\chi(z,s) = E_{\infty,\chi}(z,s) = \frac12\sum_{\substack{m,n\in\Z\\ (m,n)=1}} \frac{\big(\Ima z\big)^s}{|mz+n|^{2s}}\cdot \chi\left( \bmat{n}{*}{-m}{*}\right)\circ P_\infty
\]
where $\textbmat{n}{*}{-m}{*}$ is any element of $\PSL_2(\Z)$ with first column as indicated, and $P_\infty$ is the orthogonal projection of $V$ onto 
\[
V_\infty = \ker\left( \id_V - \chi\left(\bmat{1}{1}{0}{1}\right)\right).
\]

Deitmar and Monheim \cite{Deitmar_Monheim_eisenstein} showed that there exists $\alpha >0$ such that the Eisenstein series $E_\chi$ (for any cofinite Fuchsian group~$\Gamma$ and any twist~$\chi$ that is unitary at cusps) converges absolutely and locally uniformly for $\Rea s > 1+\alpha$. Moreover, they investigated the existence of a meromorphic continuation of $E_\chi$. 

In this article we discuss the convergence of Eisenstein series $E_\chi$ for any geometrically finite Fuchsian group~$\Gamma$ (also allowed to be non-cofinite) with twists $\chi$ having \textit{non-expanding cusp monodromy}, i.\,e., for every parabolic element $p\in\Gamma$, all eigenvalues of the endomorphism $\chi(p)$ are on the unit circle. We provide an upper bound for the abscissa of convergence of $E_\chi$. In addition, we present Fourier-type expansions of these Eisenstein series.

For any Fuchsian group $\Gamma$ with parabolic elements, the class of representations with non-expanding cusp monodromy is a proper superset of those which are only unitary at cusps. We refer to \cite[Section~5]{FP_szf} for examples of representations with non-expanding cusp monodromy that are not unitary at cusps. Throughout the last few decades, representations with non-expanding cusp monodromy have already been seen to be of huge interest. We provide a few examples in what follows.

Knopp and Mason investigated generalized vector-valued modular forms for the modular group~$\PSL_2(\Z)$ twisted with representations having non-expanding cusp monodromy, see e.\,g.\@ \cite{Knopp_Mason_definition, Knopp20032, Knopp_Mason_illinois, Knopp2011, Knopp_Mason2012}. One motivation for their work is the occurence of such forms in certain conformal field theories. Additional results in this direction were obtained, e.\,g., by Daughton \cite{Daughton2016} as well as by Kohnen and Mason \cite{Kohnen_Mason, Kohnen_Martin, Kohnen, Kohnen_Mason2}. A nice survey article, albeit with a different focus, is provided by Franc and Mason \cite{Cameron2016}.

Moreover, representations with non-expanding cusp monodromy (for arbitrary cofinite Fuchsian groups) play a crucial role in \cite{EKMZ} (who also invented the name) in their study of Lyapunov exponents of flat bundles on curves. 

In \cite{FP_szf}, we established, for any geometrically finite Fuchsian group $\Gamma$, convergence of the Selberg zeta functions for $\Gamma$ with twists of non-expanding cusp monodromy. We further showed that for a certain rather large subclass of Fuchsian groups these Selberg zeta functions have a meromorphic continuation to all of~$\C$. Moreover, we proved that for twists without non-expanding cusp monodromy, the twisted Euler products used for the definition of Selberg zeta functions do not converge. This indicates that also for Eisenstein series the representations with non-expanding cusp monodromy form a natural (in a certain sense maximal) family of twists. The results we present here are a natural continuation of our previous work and they are complementary to \cite{Knopp_Mason_definition, Knopp20032, Knopp_Mason_illinois, Kohnen_Mason, Knopp2011, Knopp_Mason2012,Daughton2016}.

The first main result of this article is a proof that the Eisenstein series $E_\chi$ with twists $\chi$ of non-expanding cusp monodromy converge in certain half-spaces in the $s$-variable. 

\begin{thmAnn}
Suppose that $\chi$ has non-expanding cusp monodromy. Then there exists $C>0$ such that the Eisenstein series $E_{\chi}$ converges absolutely and compactly in $\h\times \{s\in\C \mid \Rea s > 1+C\}$. 
\end{thmAnn}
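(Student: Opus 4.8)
The plan is to reduce to a single cusp, to isolate a polynomial bound on the twist which the non-expanding cusp monodromy hypothesis makes effective, and then to feed this into the classical convergence estimate for the untwisted parabolic Eisenstein series. Since $\Gamma$ is geometrically finite, the set $\mf C$ of cusp representatives is finite, so it suffices to prove the assertion for each summand $E_{\mf c,\chi}$ of $E_\chi$ separately, taking $C$ to be the largest of the finitely many constants so obtained. Conjugating $\Gamma$ by the scaling matrix $\sigma_{\mf c}$ and replacing $\chi$ by the corresponding conjugate representation — operations that preserve both geometric finiteness and the property of having non-expanding cusp monodromy, since they carry parabolic elements to parabolic elements — we may assume $\mf c=\infty$, $\sigma_{\mf c}=\id$, and $\Gamma_{\mf c}=\Gamma_\infty=\langle p_0\rangle$ with $p_0=\textmat{1}{\lambda}{0}{1}$, $\lambda>0$, so that
\[
 E_{\infty,\chi}(z,s)=\sum_{[g]\in\Gamma_\infty\backslash\Gamma}\big(\Ima(g.z)\big)^{s}\,\chi(g^{-1})\circ P_\infty .
\]
Each term is genuinely a function of the coset $[g]=\Gamma_\infty g$: indeed $\Ima(g.z)$ does not change under $g\mapsto p_0^{k}g$, and $\chi\big((p_0^{k}g)^{-1}\big)\circ P_\infty=\chi(g^{-1})\circ\chi(p_0)^{-k}\circ P_\infty=\chi(g^{-1})\circ P_\infty$ because $P_\infty$ projects onto the subspace of $V$ fixed by $\chi(p_0)$. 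Fixing a compact set $K\subset\h$, we must bound $\sum_{[g]}\big(\Ima(g.z)\big)^{\Rea s}\,\|\chi(g^{-1})\circ P_\infty\|$ uniformly for $z\in K$ and $\Rea s$ bounded below.

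The crucial ingredient is the estimate: there are constants $C_1\ge1$ and $N\in\N$, depending only on $\Gamma$ and $\chi$, such that $\|\chi(g)\|\le C_1\big(1+\|g\|\big)^{N}$ for all $g\in\Gamma$, where $\|g\|$ is the operator norm of $g$ viewed as a $2\times2$ matrix. To see this one decomposes $g$ according to the thick--thin structure of $\Gamma\backslash\h$ into successive syllables, each of which is either a word of uniformly bounded length in a fixed finite generating set of $\Gamma$, a power $q_j^{n}$ of a primitive parabolic element generating one of the finitely many cusp subgroups, or a power of a primitive hyperbolic element attached to a funnel. Because in a geometrically finite Fuchsian group the only distortion of the word metric relative to the hyperbolic metric is the (at most exponential) distortion inside the cusps, the number of syllables as well as the total of the quantities $\log(1+|n|)$ over the parabolic syllables are both $O\big(\log(1+\|g\|)\big)$. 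Here the hypothesis enters decisively: since all eigenvalues of $\chi(q_j)$ lie on the unit circle, $\|\chi(q_j)^{n}\|\le C_2(1+|n|)^{m}$ for all $n\in\Z$ and a suitable integer $m$ depending on $\chi$; the funnel-hyperbolic and bounded-core syllables contribute at most $\|g\|^{O(1)}$, those parts being undistorted. Multiplying the syllable contributions and passing to logarithms turns the $O\big(\log(1+\|g\|)\big)$ bounds into $\log\|\chi(g)\|\le C_3\log(1+\|g\|)$, i.e. the asserted polynomial bound. (Without the hypothesis, a single cusp excursion already forces $\|\chi(q_j)^{n}\|$ to grow like $|\mu|^{|n|}$ with $|\mu|>1$, hence $\|\chi(g)\|$ to grow doubly exponentially in $\log\|g\|$, and then no half-plane of convergence could exist; this is the same mechanism as in \cite{FP_szf}.)

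One then evaluates each term using the representative $g$ of smallest norm $\|g\|$ in its coset, which is permissible since the term depends only on $[g]$. The bottom row $(c,d)$ of $g$ is a coset invariant, the admissible representatives forming, on the affine line $\{xd-yc=1\}$, the orbit of $(a,b)$ under translation by $\lambda\Z\cdot(c,d)$; an elementary computation with these data shows that for $z\in K$ one has $\|g\|^{2}\le c_K\big(\Ima(g.z)\big)^{-1}$ for all but finitely many cosets, $c_K$ depending only on $K$, while the finitely many remaining cosets contribute a finite sum of continuous functions, harmless for compact convergence. Combining this with the preceding paragraph, applied to $g^{-1}$ and using $\|g^{-1}\|=\|g\|$ for $g\in\PSL_2(\R)$,
\[
 \|\chi(g^{-1})\circ P_\infty\|\le\|\chi(g^{-1})\|\le C_1\big(1+\|g\|\big)^{N}\le C_1\big(1+c_K^{1/2}(\Ima(g.z))^{-1/2}\big)^{N},
\]
whence
\[
 \sum_{[g]}\big(\Ima(g.z)\big)^{\Rea s}\,\|\chi(g^{-1})\circ P_\infty\|\le C_4(K)+C_5(K)\sum_{[g]}\big(\Ima(g.z)\big)^{\Rea s-N/2}.
\]
After the substitution $s\mapsto s-N/2$ the last series is the series of absolute values of the terms of the untwisted parabolic Eisenstein series for $\Gamma$ at $\infty$, which converges absolutely and locally uniformly for $\Rea(s-N/2)>1$. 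Hence $E_{\infty,\chi}$ converges absolutely and compactly on $\h\times\{\Rea s>1+N/2\}$; taking $C\sceq N/2$ and then the maximum of the constants over the finitely many cusps proves the theorem.

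The principal difficulty is the polynomial bound on the twist, more precisely the geometric input behind it: that in a geometrically finite Fuchsian group the cuspidal distortion is the only distortion and is at most exponential, so that all the potentially dangerous growth of $\|\chi(g)\|$ is concentrated in the parabolic syllables of $g$ — growth which the non-expanding cusp monodromy hypothesis then reduces to polynomial order. The remaining ingredients — the reduction to a single cusp, the elementary norm comparison showing $\|g\|^{2}$ is, for $z\in K$, comparable to $(\Ima(g.z))^{-1}$ for minimal-norm coset representatives, and the classical convergence of the untwisted series — are routine.
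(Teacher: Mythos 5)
Your proposal is correct in substance and follows the same overall strategy as the paper --- reduce to a single cusp (finitely many, so take the maximum of the constants), bound $\|\chi(g^{-1})P_\infty\|$ polynomially in $\Ima(g.z)^{-1}$ uniformly for $z$ in a compact set, and then compare with the untwisted series, which converges for exponent $>1$ --- but your key estimate is obtained by a genuinely different route. You assert $\|\chi(g)\|\le C_1(1+\|g\|)^N$ for \emph{all} $g\in\Gamma$ and sketch it via a syllable decomposition reflecting the thick--thin structure, with non-expanding cusp monodromy taming the parabolic syllables; the paper instead imports the estimate $\|\chi(h^{-1})\|\le c_3'(z)\,e^{c_4 d_\h(z,h.z)}$ from \cite[Corollary~3.4]{FP_szf}, which concerns \emph{hyperbolic} elements, and therefore needs Lemma~\ref{enoughhyp} (every nontrivial coset has hyperbolic representatives within any window of boundedly many consecutive translates), Proposition~\ref{prop:hyp}, and Proposition~\ref{prop:frac} (converting the displacement of a well-chosen hyperbolic representative into $|\log(\Ima(g.z)/\Ima z)|$) to arrive at Theorem~\ref{thm:estimate}. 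Your route is shorter and avoids the hyperbolic-representative detour, and your routine steps check out: the coset-invariance of $\chi(g^{-1})P_\infty$, the comparison $\|g\|^2\ll_K (\Ima(g.z))^{-1}$ for minimal-norm representatives (positive definiteness of $(c,d)\mapsto (cx+d)^2+c^2y^2$ uniformly on $K$, together with $|c|\ge c(\infty)>0$ off the trivial coset), $\|g^{-1}\|=\|g\|$, and the final summation yielding the half-plane $\Rea s>1+N/2$; note that what you reproduce is the one-sided ($\eps=-1$) half of the paper's Theorem~B, which is indeed the half that determines the abscissa. The cost of your route is that essentially all of the difficulty is concentrated in the sketched geometric claim that the number of syllables and the total $\sum_i\log(1+|n_i|)$ over parabolic syllables are $O(\log(1+\|g\|))$; this is true for geometrically finite Fuchsian groups, but it is precisely the substance of the analysis in \cite{FP_szf} (cf.\@ also \cite{EKMZ}), so for a complete argument you should either carry out that decomposition in detail or cite the relevant result explicitly rather than leave it at the level of a sketch.
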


As explained in \cite[Introduction]{Deitmar_Monheim_eisenstein}, a key ingredient for establishing convergence is to show that, for any cusp representative $\mf c$, the operator $\chi(g^{-1})\circ P_{\mf c}$ can be bounded polynomially in terms of $\Ima(\sigma_{\mf c}^{-1}g.z)$ with all bounding constants depending continuously on $z\in\h$ and being independent of (almost all) $[g]\in\Gamma_{\mf c}\backslash\Gamma$. For providing such a result we take advantage of our detailed study in \cite{FP_szf} on the weights that $\chi$ attributes to periodic geodesics  with long cusp excursions on $\Gamma\backslash\h$. The findings allow us now to establish the following polynomial bound.

\begin{thmBnn}
For every cusp representative $\mf c$ there exist 
\begin{enumerate}[{\rm (i)}]
\item $C_{\mf c}>0$, 
\item a continuous map $b\colon \h\to \R$, and 
\item for all $z\in\h$  a finite set $H_e(z)\subseteq \Gamma_{\mf c}\backslash\Gamma$ such that for every compact set $K\subseteq\h$, the set $\bigcup_{z\in K} H_e(z)$ is finite
\end{enumerate}
such that for all $z\in\h$ and all $[g]\in\Gamma_{\mf c}\backslash\Gamma$, $[g]\notin H_e(z)$, we have
\[
 \| \chi(g^{-1})\circ P_{\mf c} \| \leq b(z) \big(\Ima(\sigma_{\mf c}^{-1}.z)\big)^{-\eps\cdot C_{\mf c}} \big( \Ima(\sigma_{\mf c}^{-1}g.z) \big)^{\eps\cdot C_{\mf c}}
\]
where
\[
 \eps \sceq 
 \begin{cases}
  1 & \text{if $\ \Ima(\sigma_{\mf c}^{-1}g.z) \geq \Ima(\sigma_{\mf c}^{-1}.z)$,}
  \\
  -1 & \text{if $\ \Ima(\sigma_{\mf c}^{-1}g.z) < \Ima(\sigma_{\mf c}^{-1}.z)$.}
 \end{cases}
\]
\end{thmBnn}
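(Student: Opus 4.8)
The plan is to analyze the operator $\chi(g^{-1})\circ P_{\mf c}$ by decomposing a coset representative $g$ according to the cusp excursion it encodes. First I would fix the cusp $\mf c$ and, replacing $\Gamma$ by $\sigma_{\mf c}^{-1}\Gamma\sigma_{\mf c}$ and $\chi$ by $\chi(\sigma_{\mf c}\cdot\sigma_{\mf c}^{-1})$, reduce to the case $\mf c=\infty$ with standard parabolic stabilizer $\Gamma_\infty=\langle\bmat{1}{\lambda}{0}{1}\rangle$ for some $\lambda>0$; write $p\sceq\bmat{1}{\lambda}{0}{1}$ and $B\sceq\chi(p)$, which by hypothesis has all eigenvalues on the unit circle. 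The projection $P_\infty$ onto the fixed space $V_\infty=\ker(\id_V-B)$ then lands in the generalized eigenspace of $B$ for the eigenvalue $1$, on which $B$ is unipotent. The key geometric input is the elementary estimate for $g=\bmat{a}{b}{c}{d}$: when $c\neq0$, one has $\Ima(g.z)=\Ima z/|cz+d|^2$, and more importantly the ``height-raising'' exponent can be read off from how many times $z$ and $g.z$ are pushed deep into the cusp. The strategy is to exploit the double coset structure $\Gamma_\infty\backslash\Gamma/\Gamma_\infty$: every $[g]\in\Gamma_\infty\backslash\Gamma$ with $g\notin\Gamma_\infty$ has a representative $g$ with $0<c$ bounded below (by a constant depending only on $\Gamma$), and then $\Ima(g.z)$ is uniformly bounded above on compacta; the coset representatives with $g\in\Gamma_\infty$ form exactly the set whose operators are powers $B^{-k}$.

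Next I would split $\Gamma_\infty\backslash\Gamma$ into the subset $\Gamma_\infty\backslash\Gamma_\infty=\{[\id]\}$ together with those $[g]$ represented by parabolic powers — but more precisely one should look at the orbit structure via $\Gamma_\infty$ acting on the \emph{right}: on the double coset $\Gamma_\infty g\Gamma_\infty$ with $g\notin\Gamma_\infty$, pick the representative $g$ and consider $gp^n$ for $n\in\Z$. For these, $\chi((gp^n)^{-1})=B^{-n}\chi(g^{-1})$, and $\Ima(\sigma_\infty^{-1}gp^n.z)=\Ima(g.(z+n\lambda))\to0$ as $|n|\to\infty$. So the operator norm we must control is $\|B^{-n}\|\cdot\|\chi(g^{-1})\circ P_\infty\|$, and since $B$ is (on the relevant generalized eigenspace, after conjugation) unipotent, $\|B^{-n}\|$ grows only polynomially in $|n|$ — say $\|B^{-n}P_\infty\|\ll(1+|n|)^{r}$ where $r$ is one less than the size of the largest Jordan block of $B$ at eigenvalue $1$. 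This is precisely where the non-expanding cusp monodromy hypothesis is used, and it is the same mechanism as in \cite{FP_szf} for the cusp weights of long geodesic excursions; I would invoke the polynomial-bound computations from there rather than redo the Jordan-form bookkeeping. Finally I need to convert the polynomial growth in $|n|$ into the claimed power of $\Ima(\sigma_\infty^{-1}gp^n.z)$: for $z$ in a fixed compact set $K$ and $g$ ranging over a fixed finite set of double-coset representatives, one has $\Ima(g.(z+n\lambda))\asymp c_K\cdot|n|^{-2}$ for $|n|$ large, so $(1+|n|)^r\ll\big(\Ima(\sigma_\infty^{-1}gp^n.z)\big)^{-r/2}$, giving the exponent $\eps\cdot C_{\mf c}$ with $C_{\mf c}\sceq r/2$ and $\eps=-1$ in the regime $\Ima(\sigma_\infty^{-1}gp^n.z)<\Ima(\sigma_\infty^{-1}.z)$; the complementary inequality with $\eps=+1$ is handled symmetrically by noting $(1+|n|)^r\ll 1\le\big(\Ima(\sigma_\infty^{-1}gp^n.z)/\Ima(\sigma_\infty^{-1}.z)\big)^{r/2}$ whenever the latter ratio is $\ge1$.

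It remains to identify $b(z)$ and $H_e(z)$. The exceptional set $H_e(z)$ should consist of the finitely many $[g]$ for which either $g$ itself lies in a bounded neighborhood (those $[g]$ with $\Ima(\sigma_\infty^{-1}g.z)$ comparable to or larger than some threshold depending on $K$), so that the asymptotic $\Ima\asymp|n|^{-2}$ has not yet ``kicked in'', together with $[\id]$ itself if one prefers to treat it separately; the finiteness of $\bigcup_{z\in K}H_e(z)$ for compact $K$ follows because only finitely many $[g]\in\Gamma_\infty\backslash\Gamma$ have $\Ima(\sigma_\infty^{-1}g.z)$ above a given positive bound uniformly for $z\in K$ (a standard discreteness/reduction-theory fact for Fuchsian groups, used also in the classical $\chi=\mathbf 1$ theory). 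The continuous function $b(z)$ then absorbs the compactness constant $c_K$ (made into a continuous function of $z$ by taking it pointwise rather than over a compact set) times the fixed operator-norm constants $\sup_j\|\chi(g_j^{-1})\circ P_\infty\|$ over the finitely many double-coset representatives $g_j$, times the implied constant in the Jordan bound.

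\textbf{Main obstacle.} The genuinely delicate point is the uniformity: making the bound hold for \emph{all} $z\in\h$ with $b(z)$ continuous and $H_e(z)$ varying only finitely over compacta, rather than just on a single compact set. This forces one to track how the comparison constant between $|n|^{-2}$ and $\Ima(g.(z+n\lambda))$ degenerates as $\Ima z$ itself grows (i.e.\ as $z$ approaches the cusp $\mf c$), where the roles of ``small $n$'' and ``large $\Ima z$'' interact; handling the case $\Ima(\sigma_{\mf c}^{-1}.z)$ large is exactly why the bound is stated with the extra factor $\big(\Ima(\sigma_{\mf c}^{-1}.z)\big)^{-\eps C_{\mf c}}$ and why $\eps$ switches sign, and getting the two regimes to match up cleanly at the crossover is where the care is needed. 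A secondary bookkeeping issue is that the decomposition into double cosets and the selection of a ``good'' representative in each must be done so that the finitely many surviving representatives are genuinely finite and the parabolic-power parametrization is faithful; this is routine Fuchsian-group reduction theory but must be stated carefully because $\Gamma$ is only assumed geometrically finite, not cofinite.
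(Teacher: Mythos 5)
Your within-double-coset analysis is fine as far as it goes: writing the cosets inside $\Gamma_\infty g\Gamma_\infty$ as $[gp^n]$, noting $\chi\big((gp^n)^{-1}\big)P_\infty=B^{-n}\chi(g^{-1})P_\infty$ with $\|B^{-n}\|$ polynomial in $|n|$ by non-expanding cusp monodromy, and trading $|n|$ for $\Ima\big(g.(z+n\lambda)\big)\asymp \Ima z/(c^2\lambda^2 n^2)$ is correct (with $|c|\ge c(\infty)$ keeping the implied constant uniform). But the proposal then collapses at the cross-coset step: you treat $\Gamma_\infty\backslash\Gamma/\Gamma_\infty$ as a \emph{finite} set and absorb $\sup_j\|\chi(g_j^{-1})\circ P_\infty\|$ over ``the finitely many double-coset representatives'' into $b(z)$. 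For a non-elementary Fuchsian group with a cusp this double-coset space is infinite (for $\PSL_2(\Z)$ it is parametrized by pairs $(c,d)$ with $c\ge 1$, $d$ modulo $c$, $\gcd(c,d)=1$), and since $\chi$ is not unitary the norms $\|\chi(g_0^{-1})P_\infty\|$ are in general unbounded as $g_0$ runs through these representatives; bounding them by a power of $\Ima(g_0.z)^{-1}$ (note $\Ima(g_0.i)=(c^2+d^2)^{-1}\to 0$) is precisely the nontrivial content of Theorem~B, and your argument supplies no mechanism for it. The Jordan-form input you invoke from \cite{FP_szf} only concerns the single parabolic $\chi(p)$, i.e.\ the easy within-coset factor. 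A smaller slip of the same kind: the operator to be bounded is $B^{-n}\chi(g^{-1})P_\infty$, not $B^{-n}P_\infty$ (on $V_\infty$ the latter is just $P_\infty$), so the polynomial bound must be on $\|B^{-n}\|$ on all of $V$, governed by the largest Jordan block of $B$ over \emph{all} unit-modulus eigenvalues, not only the block at eigenvalue $1$.

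For comparison, the paper closes exactly this gap by a different decomposition: instead of double cosets it chooses a \emph{hyperbolic} representative $h=p_kg$ of each coset $[g]$ (Lemma~\ref{enoughhyp} shows all but a bounded range of $k$ give hyperbolic $p_kg$, and $\chi(h^{-1})P_\infty=\chi(g^{-1})P_\infty$), applies the estimate $\|\chi(h^{-1})\|\le c_3'(z)\,e^{c_4 d_\h(z,h.z)}$ of \cite[Corollary~3.4]{FP_szf}, valid uniformly for all hyperbolic elements of $\Gamma$ --- this is where the growth across cosets is genuinely controlled --- and then, in Proposition~\ref{prop:frac}, converts the displacement into $c_5(z)+\big|\log\big(\Ima(g.z)/\Ima z\big)\big|$ via the explicit hyperbolic distance formula, after choosing $h$ with $|\Rea(h.z)-\Rea z|$ bounded; the finite exceptional set $H_e(z)$ consists of the cosets meeting a fixed compact neighbourhood, which also yields the local finiteness you flagged as delicate. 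To salvage your route you would need a polynomial-in-$c$ bound on $\|\chi(g_0^{-1})P_\infty\|$ over all double-coset representatives, which is essentially Theorem~\ref{thm:estimate} itself.
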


In addition we show that---as in \cite{Deitmar_Monheim_eisenstein}---the constant $C$ in Theorem~A is governed by the order of the growth in Theorem~B.

\begin{corCnn}
Let $\mf C$ be a set of representatives (as in \eqref{Eisenintro}) for the cusps of $\Gamma$. For $\mf c\in\mf C$ let $C_{\mf c}>0$ be as in Theorem~B. Then the constant $C>0$ in Theorem~A can be chosen as
\[
 C\sceq \max_{\mf c\in\mf C} C_{\mf c}.
\]
\end{corCnn}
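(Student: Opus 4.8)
The plan is to combine Theorem~B with the classical convergence argument for Eisenstein series, tracking the abscissa of convergence carefully through each cusp. First I would fix a cusp representative $\mf c\in\mf C$ and estimate the single-cusp Eisenstein series $E_{\mf c,\chi}(z,s)$ by taking operator norms termwise:
\[
 \| E_{\mf c,\chi}(z,s) \| \leq \sum_{[g]\in\Gamma_{\mf c}\backslash\Gamma} \big(\Ima(\sigma_{\mf c}^{-1}g.z)\big)^{\Rea s} \cdot \| \chi(g^{-1})\circ P_{\mf c} \|.
\]
Splitting the sum into the finite exceptional set $H_e(z)$ (which contributes a continuous, locally bounded quantity and hence is harmless for convergence) and its complement, I would apply Theorem~B to the remaining terms. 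In the regime $\Ima(\sigma_{\mf c}^{-1}g.z)\geq\Ima(\sigma_{\mf c}^{-1}.z)$ one has $\eps=1$, so the $[g]$-term is bounded by $b(z)\,\big(\Ima(\sigma_{\mf c}^{-1}.z)\big)^{-C_{\mf c}}\big(\Ima(\sigma_{\mf c}^{-1}g.z)\big)^{\Rea s + C_{\mf c}}$; in the regime $\Ima(\sigma_{\mf c}^{-1}g.z)<\Ima(\sigma_{\mf c}^{-1}.z)$ one has $\eps=-1$, and since $\Ima(\sigma_{\mf c}^{-1}g.z)<\Ima(\sigma_{\mf c}^{-1}.z)$ the factor $\big(\Ima(\sigma_{\mf c}^{-1}g.z)\big)^{C_{\mf c}}$ is bounded by $\big(\Ima(\sigma_{\mf c}^{-1}.z)\big)^{C_{\mf c}}$, so this term is dominated by $b(z)\,\big(\Ima(\sigma_{\mf c}^{-1}.z)\big)^{-C_{\mf c}}\cdot\big(\Ima(\sigma_{\mf c}^{-1}.z)\big)^{C_{\mf c}}\cdot\big(\Ima(\sigma_{\mf c}^{-1}g.z)\big)^{\Rea s}$. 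In either case the $[g]$-term is bounded by a continuous function of $z$ times $\big(\Ima(\sigma_{\mf c}^{-1}g.z)\big)^{\Rea s + C_{\mf c}}$ (using $\Ima\leq C'$ type bounds on the bounded regime to absorb the extra exponent when $\eps=-1$ as well, or simply noting $C_{\mf c}>0$).

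The second step is to invoke the known convergence of the \emph{untwisted} Eisenstein series: the series $\sum_{[g]\in\Gamma_{\mf c}\backslash\Gamma}\big(\Ima(\sigma_{\mf c}^{-1}g.z)\big)^{\sigma}$ converges absolutely and locally uniformly in $z$ for $\Rea\sigma = \sigma > 1$ (this is standard for geometrically finite $\Gamma$ and is implicitly used in \cite{Deitmar_Monheim_eisenstein}). Applying this with $\sigma = \Rea s + C_{\mf c}$ shows that $E_{\mf c,\chi}(z,s)$ converges absolutely and compactly as soon as $\Rea s + C_{\mf c} > 1$, i.e. $\Rea s > 1 + C_{\mf c}$. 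Since by \eqref{Eisenintro} the full Eisenstein series is $E_\chi = \sum_{\mf c\in\mf C} E_{\mf c,\chi}$ and $\mf C$ is finite, $E_\chi$ converges absolutely and compactly on the intersection of the half-planes $\{\Rea s > 1 + C_{\mf c}\}$ over $\mf c\in\mf C$, which is precisely $\{\Rea s > 1 + \max_{\mf c\in\mf C} C_{\mf c}\}$. This exhibits $C\sceq\max_{\mf c\in\mf C}C_{\mf c}$ as an admissible choice in Theorem~A, proving the corollary.

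The only genuinely delicate point is making the $\eps=-1$ regime yield the \emph{same} effective exponent $\Rea s + C_{\mf c}$ as the $\eps=+1$ regime, rather than a worse one. The resolution is that in the $\eps=-1$ regime the "small" imaginary part $\Ima(\sigma_{\mf c}^{-1}g.z)$ is bounded above by the fixed quantity $\Ima(\sigma_{\mf c}^{-1}.z)$, which is continuous in $z$ and therefore locally bounded; hence the naive exponent $\Rea s$ already suffices there and the bound $\Rea s > 1$ is automatically weaker than $\Rea s > 1 + C_{\mf c}$. Thus the two regimes are handled uniformly once one absorbs all $z$-dependent but $[g]$-independent factors into the continuous bounding function, and the worst constraint across all cusps is the stated maximum. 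The remaining verifications — absolute and compact (locally uniform) convergence, as opposed to merely pointwise — follow because all bounding constants from Theorem~B depend continuously on $z$ and $\bigcup_{z\in K}H_e(z)$ is finite for compact $K$, so on any compact $K\subseteq\h$ the exceptional contribution is a finite sum of continuous functions and the tail is dominated by a convergent series of continuous functions via the untwisted comparison.
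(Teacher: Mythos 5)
Your overall plan---estimate each $E_{\mf c,\chi}$ separately by combining Theorem~B with the untwisted Eisenstein series and then take the maximum over the finitely many cusps---is the same as the paper's (which reduces to the cusp $\infty$ and splits the sum into $H_e(z)$, $H_+(z)$, $H_-(z)$ in Theorem~\ref{thm:conv}). However, your treatment of the $\eps=-1$ regime misreads Theorem~B. When $\Ima(\sigma_{\mf c}^{-1}g.z)<\Ima(\sigma_{\mf c}^{-1}.z)$, the bound reads
\[
\| \chi(g^{-1})\circ P_{\mf c}\| \leq b(z)\,\big(\Ima(\sigma_{\mf c}^{-1}.z)\big)^{C_{\mf c}}\,\big(\Ima(\sigma_{\mf c}^{-1}g.z)\big)^{-C_{\mf c}},
\]
i.e.\ \emph{both} exponents flip sign, and the factor involving $\Ima(\sigma_{\mf c}^{-1}g.z)$ blows up as that imaginary part shrinks. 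You instead keep the exponent $+C_{\mf c}$ on $\Ima(\sigma_{\mf c}^{-1}g.z)$, bound it by $\big(\Ima(\sigma_{\mf c}^{-1}.z)\big)^{C_{\mf c}}$, and conclude that these terms are dominated by $\big(\Ima(\sigma_{\mf c}^{-1}g.z)\big)^{\Rea s}$, so that ``$\Rea s>1$ already suffices there.'' This is exactly backwards: for fixed $z$ all but finitely many cosets $[g]$ lie in the $\eps=-1$ regime (the quantities $\Ima(\sigma_{\mf c}^{-1}g.z)$ accumulate only at $0$), the correct termwise bound there is $\ll_z \big(\Ima(\sigma_{\mf c}^{-1}g.z)\big)^{\Rea s-C_{\mf c}}$, and it is precisely this piece---estimate \eqref{bound2} in the paper---that forces the threshold $\Rea s>1+C_{\mf c}$.

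The error is then masked by a second slip in the $\eps=+1$ regime: from the (correct) exponent $\Rea s+C_{\mf c}$ you infer convergence for ``$\Rea s+C_{\mf c}>1$, i.e.\ $\Rea s>1+C_{\mf c}$,'' whereas $\Rea s+C_{\mf c}>1$ means $\Rea s>1-C_{\mf c}$ (this is \eqref{bound1} in the paper, and it is the weaker constraint). Taken literally, your two bounds would establish convergence of $E_{\mf c,\chi}$ already for $\Rea s>1$, which is not what Theorem~A asserts and is not justified for twists that are not unitary at the cusps; the two sign mistakes merely cancel at the level of the final stated half-plane. The repair is straightforward: keep your decomposition, bound the $\eps=-1$ terms by $b(z)\big(\Ima(\sigma_{\mf c}^{-1}.z)\big)^{C_{\mf c}}\big(\Ima(\sigma_{\mf c}^{-1}g.z)\big)^{\Rea s-C_{\mf c}}$ and compare with the untwisted series at exponent $\Rea s-C_{\mf c}>1$, handle the $\eps=+1$ terms with the constraint $\Rea s>1-C_{\mf c}$, treat $H_e(z)$ via the finiteness condition as you do, and then take the maximum over $\mf C$; this is exactly the argument of Theorem~\ref{thm:conv}.
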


The next task in this line of research is to study the possibility of meromorphic continuations of the $\chi$-twisted Eisenstein series. For this, an advance of the spectral theory in such $\chi$-twisted situations is necessary.

Our second main result is an intermediate step in this direction. We provide a Fourier-type expansion of $\chi$-twisted Eisenstein series. As soon as the representation~$\chi$ is not unitary in some cusp $\mf c$, the twisted Eisenstein series is not periodic along the horocycles centered at $\mf c$. More precisely, there are directions in the vector space $V$ projected to which the $\chi$-twisted Eisenstein series is not periodic along the horocycles centered at $\mf c$. Hence, the Eisenstein series does not allow for a Fourier expansion in the usual sense. 

However, a periodization of the Eisenstein series along these horocycles is possible, and the periodized function has a Fourier expansion. Using this Fourier expansion and then inverting the periodization process provides a Fourier-type expansion of the Eisenstein series. If $\chi$ is unitary or unitary in cusps, then this Fourier-type expansion coincides with the known Fourier-type expansion, see \cite[Theorem~3.1.2, 3.1.3]{Venkov_book} and \cite[Theorem~3.6]{Deitmar_Monheim_eisenstein}. We refer to Theorem~\ref{thm:expansion} below for the precise statement of the Fourier-type expansion.

In Section~\ref{sec:prelims} below we provide the necessary background material. In Section~\ref{sec:thmB} below we discuss the proof of Theorem~B. The proofs of Theorem~A and Corollary~C, which both depend on Theorem~B, are the content of Section~\ref{sec:thmA} below. In Section~\ref{sec:expansion} below we present the Fourier-type expansion of twisted Eisenstein series.

\subsubsection*{Acknowledgement} The authors are grateful to Julie Rowlett for generously providing Lemma~\ref{lem:julie}. They wish to thank the Max Planck Institute for Mathematics in Bonn for great hospitality and excellent working conditions during part of the preparation of this manuscript. Further, AP acknowledges support by the DFG grant PO 1483/2-1.

\section{Preliminaries and notation}\label{sec:prelims}

\subsection{Elements of hyperbolic geometry}
For the hyperbolic plane~$\h$ we use throughout the upper half-plane model
\[
 \h \sceq \{ z\in\C\mid \Ima z > 0\},\qquad ds^2_z = \frac{dzd\overline{z}}{(\Ima z)^2}. 
\]
We identify its group of orientation-preserving Riemannian isometries with 
\[
G\sceq\PSL_2(\R) = \SL_2(\R)/\{\pm\id\}
\]
in the usual way, as recalled in what follows. The action of $G$ on $\h$ is then given by the well-known fractional linear transformations.

We denote the elements of $G$ by any of their representing matrices but in square brackets. Thus,  if $g\in G$ is represented by $\textmat{a}{b}{c}{d}\in\SL_2(\R)$, then we write
\[
 g = \bmat{a}{b}{c}{d}.
\]
The action of $g$ on $\h$ is then
\[
 g.z \sceq \frac{az+b}{cz+d}\qquad (z\in\h).
\]
We denote by $d_\h$ the metric on $\h$ which is induced by the Riemannian metric of~$\h$. We identify the geodesic boundary $\partial\h$ in the usual way with $\wh\R\sceq\R\cup\{\infty\}$. As is well-known, the action of $G$ extends continuously to $\wh\R$. 

Let $\Gamma$ be a Fuchsian group, i.\,e., a discrete subgroup of~$G$. We say that~$\Gamma$ is \emph{geometrically finite} if it is finitely generated. The group~$\Gamma$ is geometrically finite if and only if the orbit space $\Gamma\backslash\h$ has only finitely many ends, or equivalently, if for any compact subset $K\subseteq \Gamma\backslash\h$, the set $\big(\Gamma\backslash\h\big)\smallsetminus K$ has only finitely many connected components. The group~$\Gamma$ is called \emph{uniform} or \emph{cocompact} if $\Gamma\backslash\h$ is compact. It is called \emph{cofinite} if $\Gamma\backslash\h$ has finite volume (with respect to the volume form induced from~$\h$). Finally, $\Gamma$ is called \emph{elementary} if $\Gamma$ fixes a point in $\h\cup\partial\h$ or a geodesic on $\h$. 

Let $g\in G\smallsetminus\{\id\}$. We call $g$ \textit{parabolic} if it fixes exactly one point in $\partial\h$, we call it \textit{hyperbolic} if it fixes exactly two points in $\partial\h$. Further, we call a point $\mf c\in \wh\R$ \textit{cuspidal} or a \textit{cusp representative} if it is the fixed point of some parabolic element $p\in\Gamma$. A \textit{cusp} of $\Gamma$ is then the $\Gamma$-orbit of a cuspidal point, or, more precisely, an equivalence class of cusp representatives where equivalence is induced by the action of $\Gamma$ on $\wh\R$. If $\Gamma$ has a cusp, then $\Gamma$ is nonuniform. (The converse is not true within the set of geometrically finite Fuchsian groups, but within the set of cofinite Fuchsian groups.)

For every cusp representative $\mf c$ we let
\[
 \Gamma_{\mf c} \sceq \{ g\in\Gamma\mid g.\mf c=\mf c\}
\]
denote its stabilizer group in $\Gamma$. We denote the elements of the coset space $\Gamma_{\mf c}\backslash\Gamma$ by $[g]$ with $g\in\Gamma$ being any representative of this equivalence class. 

From now on we shall assume that $\Gamma$ is geometrically finite. Our results are valid for all geometrically finite Fuchsian groups; they are nontrivial if $\Gamma$ is non-elementary and has at least one cusp.

\subsection{Representations} 
Let $\chi\colon \Gamma\to\GL(V)$ be a representation of $\Gamma$ on a finite-dimensional unitary space $V$. We suppose throughout that $\chi$ has \emph{non-expanding cusp monodromy}, that is, for every parabolic element $p\in\Gamma$, all eigenvalues of the endomorphism $\chi(p)$ have modulus $1$. 

Let $\End(V)$ denote the vector space of endomorphisms on $V$. We write $AB$ for the composition of  $A,B\in \End(V)$, thus $AB=A\circ B$. Throughout we fix a norm $\|\cdot\|$ on $\End(V)$. Since, e.\,g., the operator norm as well as the trace norm on $\End(V)$ are submultiplicative and all norms on $\End(V)$ are equivalent (due to finite-dimensionality of $\End(V)$), the norm $\|\cdot\|$ is essentially submultiplicative. Thus, for any choice of $\|\cdot\|$ there exists $C>0$ such that for all $A,B\in\End(V)$ we have
\[
 \|AB\| \leq C \| A\| \|B\|.
\]

\subsection{Eisenstein series}

Let $\mf c$ be a cusp representative. Let 
\[
 V_{\mf c} \sceq \{ v\in  V\mid \forall\, p\in\Gamma_{\mf c}\colon \chi(p)v=v\}
\]
be the subspace of $V$ that is stabilized elementwise by $\chi(\Gamma_{\mf c})$, and let 
\[
 P_{\mf c}\colon V\to V
\]
denote the orthogonal projection of $V$ onto $V_{\mf c}$. Choose $\sigma_{\mf c}\in G$ such that $\sigma_{\mf c}^{-1}.\mf c=\infty$ and 
\[
 \sigma_{\mf c}^{-1} \Gamma_{\mf c} \sigma_{\mf c} = \bmat{1}{\Z}{0}{1}.
\]
Thus, $\sigma_{\mf c}^{-1}$ transforms the cusp representative $\mf c$ into $\infty$, and normalizes the cusp width at $\mf c$ to $1$.

We define the (parabolic, non-holomorphic) \textit{Eisenstein series at $\mf c$ with twist $\chi$} formally by 
\begin{equation}\label{defEisencusp}
 E_{\mf c,\chi}(z,s)\sceq \sum_{g\in\Gamma_{\mf c}\backslash\Gamma} \left(\Ima\big( \sigma_{\mf c}^{-1}g.z\big)\right)^s \chi(g^{-1})P_{\mf c}\colon V\to V
\end{equation}
where $z\in\h$, $s\in\C$. Determining an upper bound on the abscissa of convergence within $\h\times\C$ is one goal of this article.

One easily checks that the endomorphism $\chi(g^{-1})P_{\mf c}$ in \eqref{defEisencusp} does not depend on the choice of the representative $g$ of $[g]$. Furthermore, \eqref{defEisencusp} does not depend on the choice of $\sigma_{\mf c}$. Thus, \eqref{defEisencusp} is indeed well-defined in its domain of convergence.

Let $\mf C$ be a (reduced) set of representatives for the cusps of $\Gamma$. This means that for each cusp of~$\Gamma$, the set $\mf C$ contains \emph{exactly one} representative.  Since $\Gamma$ is geometrically finite, $\mf C$ is finite. 

The (parabolic, non-holomorphic) \textit{Eisenstein series with twist $\chi$} is then formally given by
\begin{equation}\label{defEisen}
 E_\chi \sceq \sum_{\mf c\in\mf C} E_{\mf c,\chi}.
\end{equation}
Again, one easily checks that \eqref{defEisen} does not depend on the choice of $\mf C$. Since we will consider only parabolic non-holomorphic Eisenstein series (and no holomorphic or hyperbolic or elliptic ones), we refer to them only as `Eisenstein series.'

The definitions \eqref{defEisencusp} and \eqref{defEisen} are generalizations of the definitions of Eisenstein series without twists, with unitary twists as well as with twists that are unitary in the cusps.

If $\Gamma$ has no cusp, then the set~$\mf C$ in~\eqref{defEisen} is empty and hence the twisted Eisenstein series is constant zero. If $\Gamma$ is elementary, then the sum in \eqref{defEisencusp} contains at most one non-vanishing summand, and hence there are no convergence issues. In what follows we tacitly assume that $\Gamma$ is non-elementary and has at least one cusp.

\subsection{Further notation}

We use $\Gamma\smallsetminus \Lambda$ to denote the set of elements in $\Gamma$ that are not in $\Lambda$ (`set-difference'). Note that coset spaces are written as $\Lambda\backslash\Gamma$. 

Let $M$ be a set. For any maps $f_1\colon M\to\End(V)$, $f_2\colon M\to\R$, $g\colon M\to\R$ we write $f_j\ll g$ ($j=1,2$) if there exists $C>0$ such that 
\[
 \forall\, m\in\M\colon \| f_1(m) \| \leq C g(m)
\]
and
\[
 \forall\, m\in\M\colon |f_2(m)| \leq C g(m),
\]
respectively.

\section{Proof of Theorem~B}\label{sec:thmB}

Without loss of generality we suppose throughout that $\infty$ is a cusp representative for $\Gamma$, and that we can choose $\sigma_\infty = \textbmat{1}{0}{0}{1}$. For a proof of Theorem~B it then obviously suffices to establish the following theorem.

\begin{thm}\label{thm:estimate}
There exists a constant $c_1>0$ such that for all $z\in\h$ there exists $c_2(z)>0$ and a finite set $H_e(z)\subseteq \Gamma_\infty\backslash\Gamma$ such that for all $[g] \in \Gamma_\infty\backslash\Gamma$, $[g]\notin H_e(z)$, we have
\begin{equation}\label{eq:estimate}
 \| \chi(g^{-1})P_\infty\| \leq c_2(z) \cdot \big( \Ima z \big)^{-\eps\cdot c_1} \big( \Ima(g.z) \big)^{\eps\cdot c_1}
\end{equation}
where
\[
 \eps \sceq 
 \begin{cases}
  1 & \text{if $\ \Ima(g.z) \geq \Ima z$,}
  \\
  -1 & \text{if $\ \Ima(g.z) < \Ima z$.}
 \end{cases}
\]
The map $z\mapsto c_2(z)$ can be chosen to be continuous on $\h$, the map $z\mapsto H_e(z)$ can be chosen to be locally finite.
\end{thm}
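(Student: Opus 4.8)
The plan is to reduce the estimate for $\|\chi(g^{-1})P_\infty\|$ to a quantitative control of how $\chi$ distorts lengths along the part of $\Gamma$ that "travels into the cusp $\infty$''. Concretely, given $[g]\in\Gamma_\infty\backslash\Gamma$, write $g$ in a normal form that isolates its cusp behaviour: using the Bruhat-type decomposition relative to $\Gamma_\infty$, one may choose a representative $g$ of $[g]$ whose lower-left entry $c=c(g)$ is (up to the discrete set of possibilities) the unique nonzero value, and then $\Ima(g.z) = \Ima z/|cz+d|^2$. When $\Ima(g.z)$ is large, $|cz+d|$ is small, which for $z$ in a fixed compact set forces $|c|$ small; since the set of possible $c$ is discrete, there are only finitely many $[g]$ with $\Ima(g.z)$ above any given threshold — this is exactly the finite exceptional set $H_e(z)$, and its local finiteness in $z$ follows because the threshold can be taken uniform over a compact $K\subseteq\h$. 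So after removing $H_e(z)$ we may assume $\Ima(g.z)\le \Ima z$, i.e.\ $\eps=-1$, and we must show
\[
 \|\chi(g^{-1})P_\infty\| \ll_z \big(\Ima z\big)^{c_1}\big(\Ima(g.z)\big)^{-c_1}.
\]

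The core of the argument is then to bound $\|\chi(g^{-1})P_\infty\|$ by a power of $|c(g)|$ (equivalently, by a power of $\Ima z/\Ima(g.z)$). Here I would invoke the detailed analysis from \cite{FP_szf} of the weights $\chi$ assigns to periodic geodesics with long cusp excursions: the key structural fact is that, because $\chi$ has non-expanding cusp monodromy, the generator $T=\textbmat{1}{1}{0}{1}$ of $\sigma_\infty^{-1}\Gamma_\infty\sigma_\infty$ has $\chi$ of the associated parabolic element with all eigenvalues on the unit circle, so $\|\chi(p)^n\|$ grows at most polynomially in $n$, say $\|\chi(p)^n\|\ll (1+|n|)^{k}$ for some $k$ depending only on $\chi$. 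Now decompose an arbitrary $\gamma\in\Gamma$ (a representative of $[g]$, chosen $\Gamma_\infty$-reduced on the right, or on both sides) into a bounded-complexity word: the "deep into the cusp'' part of $\gamma$ contributes factors of the form $\chi(p)^{n}$ with $|n|$ controlled linearly by $\log|c(\gamma)|$ (this is the standard relation between the number of times a geodesic wraps around the cusp and the size of the matrix entry $c$), while the remaining part of $\gamma$ ranges over a finite set of "shapes'' with uniformly bounded $\chi$. Composing and using essential submultiplicativity of $\|\cdot\|$ on $\End(V)$ yields
\[
 \|\chi(\gamma^{-1})\| \ll \big(1+\log|c(\gamma)|\big)^{k} \ll_\delta |c(\gamma)|^{\delta}
\]
for any $\delta>0$; precomposing with the orthogonal projection $P_\infty$ only helps (it has norm $1$). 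Choosing $\delta=c_1$ and recalling $|c(\gamma)|^2 \asymp_z \Ima z/\Ima(g.z)$ (for $z$ in a compact set, away from the finitely many exceptional cosets) gives exactly the claimed bound, with $c_2(z)$ absorbing the $z$-dependent comparison constant, which can be taken continuous in $z$ since $|cz+d|$ depends continuously on $z$.

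The step I expect to be the main obstacle is making the passage "$\|\chi(\gamma^{-1})\|$ is polynomial in the cusp-winding number'' genuinely uniform over all cosets $[g]\notin H_e(z)$, i.e.\ controlling the non-cusp part of the word uniformly. The subtlety is that a general $\gamma\in\Gamma$ need not be literally a power of $p$ times a bounded element; one really needs a geometric input — essentially that a geodesic ray from (a lift of) $z$ toward $\gamma^{-1}.\infty$ makes an excursion into the cusp of depth comparable to $\tfrac12\log(\Ima z/\Ima(\gamma.z))$ and otherwise stays in a fixed compact part of $\Gamma\backslash\h$ — together with the product estimate for $\chi$ along such paths established in \cite{FP_szf}. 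Once that geometric decomposition is in place with constants depending only on $\Gamma$ and $\chi$ (and the choice of a relatively compact fundamental region for the thick part), the estimate for the exponent $c_1$, the continuity of $c_2$, and the local finiteness of $H_e(\cdot)$ are all routine; the only remaining care is to verify that the finitely many cosets where the "bounded part'' could be large are precisely swept into $H_e(z)$, which again follows from discreteness of $\Gamma$ and properness of the action on $\h$.
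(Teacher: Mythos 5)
Your reduction to the case $\Ima(g.z)<\Ima z$ by dumping all cosets with $\Ima(g.z)\ge\Ima z$ into $H_e(z)$ is acceptable for the literal statement (finiteness and local finiteness follow from the standard counting argument for $\#\{[g]:\,|cz+d|\le 1\}$, using discreteness and $c(\infty)\ge 1$), even though the paper's exceptional set is chosen differently. The genuine problem is the core quantitative claim: that $\|\chi(\gamma^{-1})\|\ll(1+\log|c(\gamma)|)^{k}$, hence $\ll_\delta|c(\gamma)|^{\delta}$ for every $\delta>0$. This is false for general twists with non-expanding cusp monodromy. Take $\Gamma=\PSL_2(\Z)$ and $\chi$ an even symmetric power of the standard representation, say the adjoint representation: $\chi(p)$ is unipotent, so the cusp monodromy is non-expanding, $V_\infty$ is spanned by the nilpotent $E=\textbmat{0}{1}{0}{0}$, and $\chi(g^{-1})E=g^{-1}Eg=\textbmat{cd}{d^2}{-c^2}{-cd}$, whose norm is comparable to $(|c|+|d|)^2$ --- a fixed \emph{positive power} of the matrix entries, not polylogarithmic. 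Your reasoning breaks in two places: first, the winding number $n$ of a cusp excursion of depth $D$ is of order $e^{D}$, i.e.\ of order $|c(\gamma)|$, not of order $\log|c(\gamma)|$ (you conflated the depth with the winding number), so even a single excursion gives $\|\chi(p)^n\|\ll(1+|n|)^k\ll|c|^{k}$, a fixed power; second, a general $\gamma$ is not of the form (bounded)$\cdot p^{n}\cdot$(bounded), and the geodesic from $z$ towards $\gamma$ may make many cusp excursions and thick-part crossings, each contributing a multiplicative factor, so there is no uniform "finite set of shapes'' for the non-cusp part. A quick sanity check: your bound with arbitrary $\delta>0$ would give convergence of $E_{\infty,\chi}$ for $\Rea s>1$ for \emph{every} such twist, which is stronger than Theorem~A and not true in general.

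What is actually available --- and what the paper uses --- is an exponential bound in the hyperbolic displacement: by \cite[Corollary~3.4]{FP_szf}, $\|\chi(h^{-1})\|\le c_3'(z)\,e^{c_4 d_\h(z,h.z)}$ for \emph{hyperbolic} $h\in\Gamma$, with $c_4$ depending on $\chi$. This yields the fixed exponent $c_1=c_4$, not an arbitrarily small one. To exploit it, the paper proves two points your sketch does not address: (i) all but boundedly many representatives $p_k g$ of a coset $[g]$ are hyperbolic (a trace estimate using $c(\infty)$, Lemma~\ref{enoughhyp}), so the estimate of \cite{FP_szf} is applicable to some representative; and (ii) among these one can pick $h=p_kg$ with $|\Rea(h.z)-\Rea z|$ bounded, so that the explicit distance formula gives $d_\h(z,h.z)\le c_5(z)+\big|\log\big(\Ima(g.z)/\Ima z\big)\big|$ (Proposition~\ref{prop:frac}); this step is essential because without aligning real parts $d_\h(z,g.z)$ can be much larger than $|\log(\Ima(g.z)/\Ima z)|$. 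Relatedly, your identification $|c(\gamma)|^2\asymp_z \Ima z/\Ima(g.z)$ is not correct: the exact relation is $\Ima z/\Ima(g.z)=|cz+d|^2$, which can be far larger than $c^2$. If you replace your polylogarithmic claim by the correct exponential-in-distance input from \cite{FP_szf} and add the two points above, your argument essentially becomes the paper's proof; as it stands, the central estimate is wrong and the theorem does not follow.
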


As a preparation for the proof of Theorem~\ref{thm:estimate} we first provide---in Proposition~\ref{prop:hyp} below---a bound on $\|\chi(g^{-1})P_\infty\|$ in terms of the displacements $d_\h(z,h.z)$ for $z\in\h$ and $h\in\Gamma$ being a \textit{hyperbolic} representative of $[g]\in\Gamma_\infty\backslash\Gamma$. Lemma~\ref{enoughhyp} below shows that indeed all but finitely many representatives of $[g]$ are hyperbolic. In Proposition~\ref{prop:frac} below we then study the relation between $d_\h(z,h.z)$ and $\Ima(g.z)$.

Throughout we set 
\[
 p_k\sceq \bmat{1}{k}{0}{1} \qquad (k\in\Z).
\]
Since $\sigma_\infty =\textbmat{1}{0}{0}{1}$, the cusp width at $\infty$ is $1$ and 
\[
 \Gamma_\infty = \{ p_k \mid k\in\Z\}.
\]
Let 
\[
 c(\infty) \sceq \inf \left\{ |c| \ \left\vert\  \exists\, g\in\Gamma\smallsetminus\Gamma_\infty  \colon g=\bmat{\ast}{\ast}{c}{\ast} \right.\right\}.
\]
As is well-known, the discreteness of $\Gamma$ and $\infty$ being a cuspidal point of $\Gamma$ of width~$1$ imply that $c(\infty)\geq 1$.

In the proof of the following lemma, for any $h\in G$, we use $|\tr(h)|$ to denote the absolute value of the trace of a representative of $h$ in $\SL_2(\R)$. Note that $|\tr(h)|$ is indeed independent of the choice of the representative.

\begin{lemma}\label{enoughhyp}
Let $[g]\in \Gamma_\infty\backslash\Gamma$, $[g]\not=\Gamma_\infty$. Let $g$ be a representative of $[g]$ and set 
\[
 E\sceq \{ k\in \Z \mid \text{$p_kg$ is not hyperbolic}\}.
\]
Then 
\[
 \max E - \min E \leq 2 \left\lfloor \frac{4}{c(\infty)}\right\rfloor + 1.
\]
In particular, if $L\subseteq\Z$ is a subset which contains at least 
\[
 2\left\lfloor \frac{4}{c(\infty)}\right\rfloor + 2
\]
consecutive integers, then there exists $k\in L$ such that $p_kg$ is hyperbolic.
\end{lemma}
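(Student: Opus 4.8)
The plan is to analyze when $p_k g$ fails to be hyperbolic, which happens exactly when $|\tr(p_k g)| \le 2$ (the parabolic/elliptic/identity cases), and to show this trace condition forces $k$ to lie in a short interval whose length is controlled by $c(\infty)$. Write $g = \textbmat{a}{b}{c}{d}$ with $ad-bc=1$; since $[g]\ne\Gamma_\infty$ we have $g\notin\Gamma_\infty$, so $c\ne 0$ and hence $|c|\ge c(\infty)\ge 1$. A direct computation gives
\[
 p_k g = \bmat{1}{k}{0}{1}\bmat{a}{b}{c}{d} = \bmat{a+kc}{b+kd}{c}{d},
\]
so $\tr(p_k g) = a + d + kc$ (up to the overall sign ambiguity, which is irrelevant for $|\tr|$). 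Thus $p_k g$ is non-hyperbolic iff $|a+d+kc|\le 2$, i.e.
\[
 -\frac{2+a+d}{c} \le k \le \frac{2-a-d}{c} \quad\text{or the reversed inequalities if } c<0.
\]

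Next I would extract the length bound. The set $E$ of such $k$ is $\Z$ intersected with a real interval of length $\frac{4}{|c|}$. Therefore $\max E - \min E \le \frac{4}{|c|} < \frac{4}{|c|} + 1 \le \frac{4}{c(\infty)} + 1$, and since $\max E - \min E$ is an integer it is at most $\left\lfloor \frac{4}{c(\infty)} + 1\right\rfloor = \left\lfloor \frac{4}{c(\infty)}\right\rfloor + 1$. Actually, to match the stated bound $2\lfloor 4/c(\infty)\rfloor + 1$ comfortably (which is weaker and hence certainly also valid), I can simply note $\left\lfloor \frac{4}{c(\infty)}\right\rfloor + 1 \le 2\left\lfloor \frac{4}{c(\infty)}\right\rfloor + 1$ whenever $\lfloor 4/c(\infty)\rfloor \ge 0$, which always holds. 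So $\max E - \min E \le 2\left\lfloor \frac{4}{c(\infty)}\right\rfloor + 1$ as claimed. One should remark that $E$ may be empty, in which case the inequality is vacuous (or one interprets $\max E - \min E$ as $-\infty$); this causes no problem since the conclusion is only used to guarantee the existence of hyperbolic $p_k g$.

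For the "in particular" clause: if $L\subseteq\Z$ contains at least $2\lfloor 4/c(\infty)\rfloor + 2$ consecutive integers but every $p_k g$ with $k\in L$ were non-hyperbolic, then $L\subseteq E$, so $E$ would contain $2\lfloor 4/c(\infty)\rfloor + 2$ consecutive integers, forcing $\max E - \min E \ge 2\lfloor 4/c(\infty)\rfloor + 1$... wait, that is consistent; I need strict violation, so instead: $L$ containing $N$ consecutive integers gives $\max L - \min L = N-1 \ge 2\lfloor 4/c(\infty)\rfloor + 1$, and if $L\subseteq E$ this contradicts $\max E - \min E \le 2\lfloor 4/c(\infty)\rfloor + 1$ only when the inequality is strict; to be safe I would either prove the slightly sharper bound $\max E - \min E \le 2\lfloor 4/c(\infty)\rfloor$ (which follows from $\max E - \min E < 4/c(\infty) + 1 \le \lfloor 4/c(\infty)\rfloor + 2$, hence $\le \lfloor 4/c(\infty)\rfloor + 1$, and then absorb), or simply observe that $N$ consecutive integers all in $E$ would need $N \le \max E - \min E + 1$, and one checks $2\lfloor 4/c(\infty)\rfloor + 2 > 2\lfloor 4/c(\infty)\rfloor + 1 + 1$ is false — so I will pin down the constant carefully: the cleanest route is to prove $|E| \le \lfloor 4/c(\infty)\rfloor + 1 \le 2\lfloor 4/c(\infty)\rfloor + 1$, since an integer interval of real-length $< 4/|c| + 1 \le 4/c(\infty)+1$ contains at most $\lfloor 4/c(\infty)\rfloor + 1$ integers; then $2\lfloor 4/c(\infty)\rfloor + 2 > |E|$ forces some $k\in L$ outside $E$. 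The only genuinely delicate point is bookkeeping with floors and the sign of $c$; the underlying geometry — parabolic–elliptic boundary in terms of trace, and discreteness giving $c(\infty)\ge 1$ — is standard and supplied by the excerpt.
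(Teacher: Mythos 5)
Your argument is correct and follows essentially the same route as the paper: non-hyperbolicity of $p_kg$ means $|\tr(p_kg)|=|a+d+kc|\le 2$, and since $[g]\neq\Gamma_\infty$ forces $c\neq 0$, hence $|c|\ge c(\infty)$, the admissible $k$ lie in an interval of length $4/|c|\le 4/c(\infty)$; the paper merely normalizes the representative $g$ to be itself non-hyperbolic and bounds $|k|\le 4/c(\infty)$ via the reverse triangle inequality, yielding the same constant. Your extra care with the ``in particular'' clause is warranted: the stated inequality $\max E-\min E\le 2\lfloor 4/c(\infty)\rfloor+1$ alone does not exclude a block of $2\lfloor 4/c(\infty)\rfloor+2$ consecutive integers inside $E$, and one needs, as you observe, the slightly stronger consequence that the proofs actually deliver---$|E|\le\lfloor 4/c(\infty)\rfloor+1$ in your counting version, or $E\subseteq\{k:|k|\le\lfloor 4/c(\infty)\rfloor\}$ and hence $|E|\le 2\lfloor 4/c(\infty)\rfloor+1$ in the paper's normalization---to force some $k\in L$ with $p_kg$ hyperbolic.
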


\begin{proof}
If all representatives of $[g]$ are hyperbolic, then nothing remains to be shown. Thus, suppose that $[g]$ has a non-hyperbolic representative, and let 
\[
 g = \bmat{a}{b}{c}{d}
\]
be such a representative. We investigate which other representatives of $[g]$ might be non-hyperbolic. Clearly, $[g] = \{p_kg\mid k\in\Z\}$. Suppose that $k\in\Z$ is such that $p_kg$ is not hyperbolic. Thus,
\[
 2 \geq |\tr(p_kg)| = |a+d + kc| \geq k|c| - |a+d| = |kc| - |\tr g|.
\]
Since $|\tr g|\leq 2$ and $c\not=0$ (because $[g]\not=\Gamma_\infty$),
\[
 |k|\leq \frac{4}{|c|} \leq \frac{4}{c(\infty)}. 
\]
This completes the proof.
\end{proof}

For $[g]\in\Gamma_\infty\backslash\Gamma$, $[g]\not=\Gamma_\infty$, let $\Hyp([g])$ denote the set of hyperbolic representatives of $[g]$. 

\begin{prop}\label{prop:hyp}
There exist a continuous map $c_3\colon\h\to\R$ and a constant $c_4>0$ such that for each $[g]\in \Gamma_\infty\backslash\Gamma$, $[g]\not=\Gamma_\infty$, we have
\begin{equation}\label{ineq}
 \|\chi(g^{-1})P_\infty\| \leq c_3(z)\cdot \inf_{h\in\Hyp([g])} e^{c_4 d_\h (z, h.z) }.
\end{equation}
\end{prop}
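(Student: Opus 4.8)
\textbf{Proof strategy for Proposition~\ref{prop:hyp}.}
The plan is to translate the bound on $\|\chi(g^{-1})P_\infty\|$ into a statement about the weight that $\chi$ attributes to a suitable hyperbolic element, and then control that weight by the hyperbolic displacement. Fix $[g]\in\Gamma_\infty\backslash\Gamma$ with $[g]\neq\Gamma_\infty$, and let $h\in\Hyp([g])$ be an arbitrary hyperbolic representative, so $h = p_k g$ for some $k\in\Z$. First I would use the definition of $P_\infty$ as the orthogonal projection onto $V_\infty$, the subspace fixed by $\chi(\Gamma_\infty)$: since $p_k\in\Gamma_\infty$ and $\chi(p_k)$ restricts to the identity on $V_\infty$, we get $\chi(g^{-1})P_\infty = \chi(g^{-1})\chi(p_k^{-1})P_\infty = \chi((p_kg)^{-1})P_\infty = \chi(h^{-1})P_\infty$ (here one uses that $P_\infty$ maps into $V_\infty$, on which $\chi(p_k^{-1})$ acts trivially). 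Hence $\|\chi(g^{-1})P_\infty\| = \|\chi(h^{-1})P_\infty\| \leq C\|\chi(h^{-1})\|$ by essential submultiplicativity, with $C$ depending only on the chosen norm. So it suffices to bound $\|\chi(h^{-1})\|$ for hyperbolic $h$ — and since the norm of $\chi(h^{-1})$ and of $\chi(h)$ are comparable up to a constant (finitely many conjugacy considerations, or simply $\chi(h^{-1})=\chi(h)^{-1}$ and norm equivalence), it is enough to bound $\|\chi(h)\|$.

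The core input is the detailed analysis from \cite{FP_szf} of the weights $\chi$ assigns to hyperbolic (and more generally non-unitary) elements in terms of their cusp excursions; since $\chi$ has non-expanding cusp monodromy, that work provides a bound of the form $\|\chi(h)\| \ll e^{c_4\, \ell(h)}$ where $\ell(h)$ is the translation length of $h$ (the displacement of $h$ along its axis), with $c_4>0$ independent of $h$. I would cite the appropriate statement from \cite{FP_szf} for this. To pass from translation length to $d_\h(z,h.z)$, I use the elementary hyperbolic-geometry inequality $\ell(h) \leq d_\h(z,h.z)$ for every $z\in\h$ (the displacement of any point is at least the translation length), valid for hyperbolic $h$. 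This gives $\|\chi(h)\| \ll e^{c_4 d_\h(z,h.z)}$ for every $z$, hence $\|\chi(g^{-1})P_\infty\| \leq c_3\cdot e^{c_4 d_\h(z,h.z)}$ with a constant $c_3$; taking the infimum over $h\in\Hyp([g])$ yields \eqref{ineq}. Since this constant $c_3$ is genuinely constant (independent of $z$), it is trivially continuous in $z$, so the continuity claim for $c_3\colon\h\to\R$ is immediate; alternatively, if the \cite{FP_szf} bound carries a $z$-dependence through the choice of base point or a fundamental domain, one packages that dependence into $c_3(z)$ and checks it varies continuously, which is routine.

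The main obstacle is locating and correctly invoking the quantitative weight estimate from \cite{FP_szf}: one needs precisely the statement that, for a representation with non-expanding cusp monodromy, the operator norm $\|\chi(h)\|$ grows at most exponentially in the translation length of $h$, uniformly over all hyperbolic elements, with the growth rate $c_4$ depending only on $\chi$ and $\Gamma$. This is exactly the phenomenon that fails for twists \emph{without} non-expanding cusp monodromy — for such twists a periodic geodesic with long cusp excursion can force $\|\chi(h)\|$ to grow like a power of $e^{\ell(h)}$ times an additional factor that is not controlled by $\ell(h)$ alone — so the non-expanding hypothesis is used in an essential way here. Everything else (the $P_\infty$-absorption identity, $\ell(h)\leq d_\h(z,h.z)$, norm equivalence, submultiplicativity) is standard and I would treat it briefly.
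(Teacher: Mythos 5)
Your skeleton matches the paper's proof (reduce to a hyperbolic representative, drop $P_\infty$ by submultiplicativity, cite the weight estimate from \cite{FP_szf}, take the infimum over $\Hyp([g])$), but the key quantitative input is misquoted, and the misquotation is not harmless. There is no bound of the form $\|\chi(h)\|\le C\,e^{c_4\ell(h)}$ holding uniformly over all hyperbolic $h\in\Gamma$ with $C,c_4$ independent of $h$: once $\chi$ is not unitary at a cusp, conjugating a fixed hyperbolic element by parabolic powers produces hyperbolic elements of constant translation length but unbounded norm. Concretely, let $\Gamma$ be a geometrically finite Fuchsian group freely generated by a parabolic $T$ and a hyperbolic $A$, and define $\chi$ by $\chi(T)=\textmat{1}{1}{0}{1}$, $\chi(A)=\textmat{0}{-1}{1}{0}$; this $\chi$ has non-expanding cusp monodromy, yet $h_n=T^nAT^{-n}$ satisfies $\ell(h_n)=\ell(A)$ for all $n$ while $\|\chi(h_n)\|\asymp n^2$. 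So the quantity entering the exponential cannot be the translation length; it must be the displacement of the specific point $z$. This is exactly the form in which the paper invokes \cite[Corollary~3.4]{FP_szf}: for hyperbolic $h$ one has $\|\chi(h^{-1})\|\le c_3'(z)\,e^{c_4 d_\h(z,h.z)}$ with $c_3'$ continuous on $\h$ and $c_4$ independent of $h$ and $z$. Your fallback remark (``package the $z$-dependence into $c_3(z)$'') gestures at this, but it is not a routine repackaging of a multiplicative constant: the $z$-dependence sits in the exponent through $d_\h(z,h.z)$, which is why the proposition carries a genuinely $z$-dependent $c_3$ and why the inequality $\ell(h)\le d_\h(z,h.z)$ plays no role.

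With the displacement form of the input, the rest of your argument is the paper's: the absorption identity $\chi(g^{-1})P_\infty=\chi(h^{-1})P_\infty$ (the well-definedness already noted after \eqref{defEisencusp}, which you prove correctly), the bound $\|\chi(h^{-1})P_\infty\|\ll\|\chi(h^{-1})\|$ by essential submultiplicativity, the estimate applied directly to $h^{-1}$ (note $d_\h(z,h.z)=d_\h(h^{-1}.z,z)$), nonemptiness of $\Hyp([g])$ from Lemma~\ref{enoughhyp}, and then the infimum over $h$. One further small flaw: your reduction from $\|\chi(h^{-1})\|$ to $\|\chi(h)\|$ via ``norm equivalence'' is not valid---equivalence of norms on $\End(V)$ says nothing about comparing $\|B\|$ with $\|B^{-1}\|$, and no uniform comparison holds over $\Gamma$---but it is also unnecessary, since the estimate from \cite{FP_szf} is applied to $h^{-1}$ itself.
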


\begin{proof}
Let $[g]\in \Gamma_\infty\backslash\Gamma$, $[g]\not=\Gamma_\infty$. By Lemma~\ref{enoughhyp},  $\Hyp([g])\not=\emptyset$.  Thus, the right hand side of \eqref{ineq} is well-defined. Pick $h\in \Hyp([g])$. Then
\[
 \|\chi(h^{-1})P_\infty\| \ll \| \chi(h^{-1})\| \cdot \| P_\infty\| \ll \| \chi(h^{-1})\|.
\]
Since $\chi$ has non-expanding cusp monodromy, \cite[Corollary~3.4]{FP_szf} shows that there exists a continuous map $c'_3\colon\h\to\R$ and a constant $c_4>0$ (both independent of $[g]$ and $h$) such that
\[
 \| \chi(h^{-1})\| \leq c'_3(z) e^{c_4 d_\h (z, h.z)}.
\]
This completes the proof.
\end{proof}

\begin{prop}\label{prop:frac}
For every $z\in\h$ there exist a finite set $H_e(z)\subseteq\Gamma_\infty\backslash\Gamma$ and $c_5(z)>0$ such that $\Gamma_\infty\in H_e(z)$ and for all $[g]\in\Gamma_\infty\backslash\Gamma$ with $[g]\notin H_e(z)$, we have
\begin{equation}\label{distest}
-\infty < \inf_{h\in \Hyp([g])} d_\h(h.z, z) \leq c_5(z)+\left| \log \frac{\Ima(g.z)}{\Ima z} \right|.
\end{equation}
The map $z\mapsto c_5(z)$ can be chosen to be continuous on $\h$. The map $z\mapsto H_e(z)$ can be chosen such that for every compact set $K\subseteq \h$, the set $\bigcup_{z\in K} H_e(z)$ is finite (the \emph{finiteness condition}).
\end{prop}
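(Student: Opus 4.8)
The plan is to reduce \eqref{distest} to an elementary computation in the upper half-plane by exploiting the freedom to choose a hyperbolic representative of $[g]$ within a fixed window of integers. Fix $z \in \h$ and let $g = \textbmat{a}{b}{c}{d}$ be a representative of $[g] \neq \Gamma_\infty$, so $c \neq 0$ and $|c| \geq c(\infty) \geq 1$. For any $k \in \Z$ the element $p_k g = \textbmat{a+kc}{b+kd}{c}{d}$ is another representative, and $\Ima(p_k g.z) = \Ima(g.z)$ is independent of $k$. The key geometric fact I would use is the standard identity
\[
 \cosh d_\h(w, z) = 1 + \frac{|w - z|^2}{2\,\Ima w \,\Ima z} \qquad (w, z \in \h),
\]
applied with $w = (p_k g).z$. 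Writing $z = x + iy$ and $(p_k g).z = u_k + i v$ with $v = \Ima(g.z)$, one has $u_k = u_0 + (\text{something linear in }k)$ — more precisely $(p_k g).z = k \cdot \tfrac{z}{c(cz+d)}\cdot(\ldots)$; the cleanest route is to note $(p_k g).z = (g.z) + k$ is \emph{false}, so instead write $(p_k g).z = p_k.(g.z) = g.z + k$. Wait — that IS correct: $p_k$ acts as translation by $k$, so $(p_k g).z = (g.z) + k$, hence $u_k = \Rea(g.z) + k$ and $v$ is unchanged.

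With this, $|(p_k g).z - z|^2 = (u_0 + k - x)^2 + (v - y)^2$ where $u_0 = \Rea(g.z)$. Choosing $k$ to minimize $|u_0 + k - x|$ gives $|u_0 + k - x| \leq \tfrac12$, so for such $k$,
\[
 \cosh d_\h((p_k g).z, z) \leq 1 + \frac{\tfrac14 + (v-y)^2}{2vy} \ll 1 + \frac{(v-y)^2}{vy} + \frac{1}{vy}.
\]
A short estimate shows $1 + \tfrac{(v-y)^2}{vy} \asymp \tfrac{v}{y} + \tfrac{y}{v}$ up to constants (both sides are $\asymp \max(v/y, y/v)$), so $\cosh d_\h((p_k g).z, z) \ll \max(v/y, y/v) + \tfrac{1}{vy}$. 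Then $d_\h((p_k g).z, z) \leq \arcosh(\text{RHS})$, and since $\arcosh t \leq \log(2t)$ for $t \geq 1$ we get
\[
 d_\h((p_k g).z, z) \leq \log\!\Big(C\big(\max(v/y, y/v) + \tfrac{1}{vy}\big)\Big) \leq c_5(z) + \Big| \log \frac{v}{y} \Big|
\]
once $v$ is bounded away from the problematic regime; here $c_5(z)$ absorbs $\log C$, $\log(1 + 1/(v_0 y))$ for a lower bound $v_0$ on $v$, and the continuous dependence on $y = \Ima z$. The infimum over $h \in \Hyp([g])$ is then $\leq$ the value at this particular $k$ \emph{provided} that $p_k g$ happens to be hyperbolic — and if it is not, Lemma~\ref{enoughhyp} guarantees a hyperbolic $p_{k'} g$ with $|k' - k| \leq 2\lfloor 4/c(\infty)\rfloor + 2$, which changes $|u_{k'} - x|$ by at most a fixed constant $N := 2\lfloor 4/c(\infty)\rfloor + 2 + \tfrac12$, hence changes the $\cosh$-bound by at most a factor depending only on $\Gamma$ and continuously on $z$, which is again absorbed into $c_5(z)$.

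The exceptional set $H_e(z)$ and the lower bound $-\infty < \inf_h d_\h(h.z, z)$ require a bit of care: the displayed bound degenerates exactly when $v = \Ima(g.z)$ is very small (the $1/(vy)$ term blows up) or, symmetrically, when some $h.z$ is extremely close to $z$ (but that cannot happen for $h$ hyperbolic away from finitely many $[g]$, by discreteness of $\Gamma$). I would define $H_e(z) := \{\Gamma_\infty\} \cup \{[g] : \Ima(g.z) < \delta(z)\}$ for a suitable continuous threshold $\delta(z) > 0$; since $\Ima(g.z) = \Ima(p_k g.z)$ and the number of cosets $[g]$ with $\Ima(g.z)$ above any positive bound is finite (standard: the $\Gamma_\infty$-orbit structure on $\h$ together with discreteness forces $\Ima(g.z) \to 0$ along $[g] \to \infty$, as in the classical Eisenstein-series convergence argument), this $H_e(z)$ is finite. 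For the finiteness condition, on a compact $K \subseteq \h$ the threshold $\delta$ can be taken to be $\min_{z \in K}\delta(z) > 0$ and the uniform lower bound on $\Ima(g.z)$ over $z \in K$ together with properness gives that $\bigcup_{z \in K} H_e(z)$ is finite. The lower bound $-\infty < \inf_h d_\h(h.z, z)$ is immediate since $d_\h \geq 0$. I expect the main obstacle to be the bookkeeping that makes $H_e(z)$, $\delta(z)$, and $c_5(z)$ simultaneously (a) large enough that the arcosh/log estimate is valid and clean, (b) continuous/locally finite in $z$, and (c) compatible with the shift by up to $N$ forced by Lemma~\ref{enoughhyp}; the geometry itself is routine, but threading these uniformity constraints without circularity is the delicate part.
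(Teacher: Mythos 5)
Your geometric core is sound and is essentially the paper's argument: pass to a representative $p_kg$ with bounded horizontal displacement (using Lemma~\ref{enoughhyp} to correct $k$ by at most an absolute constant so that the representative is hyperbolic), and then feed the explicit distance formula $\cosh d_\h(w,z)=1+\tfrac{|w-z|^2}{2\Ima w\,\Ima z}$ into a logarithmic estimate. The genuine gap is in your treatment of the exceptional set: you define $H_e(z)=\{\Gamma_\infty\}\cup\{[g]\,:\,\Ima(g.z)<\delta(z)\}$ and claim it is finite because only finitely many cosets have $\Ima(g.z)$ \emph{above} a positive bound. That finiteness statement is correct, but it shows precisely that your $H_e(z)$ is the complement of a finite set, hence \emph{infinite} (for non-elementary $\Gamma$ with a cusp there are infinitely many cosets, and all but finitely many satisfy $\Ima(g.z)<\delta(z)$). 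With this $H_e(z)$ the proposition's hypotheses are violated and, worse, the bound \eqref{distest} would be asserted only for the finitely many cosets with large $\Ima(g.z)$, which is useless for the convergence argument in Theorem~\ref{thm:conv}. The same reversal breaks your verification of the finiteness condition over compact $K$.

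The fix is not hard, and in fact the regime you tried to excise does not need to be excluded at all: when $v=\Ima(g.z)$ is small, the offending term satisfies $\log\tfrac{1}{vy}\le\bigl|\log\tfrac{v}{y}\bigr|+2|\log y|$, so it is absorbed into $c_5(z)+\bigl|\log\tfrac{v}{y}\bigr|$ with $c_5$ continuous in $z$ (more concretely, $\max(v/y,y/v)+\tfrac{C}{vy}\le \max(v/y,y/v)\bigl(1+\tfrac{C}{y^2}\bigr)$). With that observation your estimate goes through for every $[g]\neq\Gamma_\infty$, and the only exclusions needed are the finitely many cosets forced by the correction step, which is how the paper proceeds: there $H_e(z)$ is defined via proper discontinuity as the (finite, locally constant) set of cosets having a representative mapping a compact neighborhood $K$ of $z$ into a fixed compact box $K'$, which excludes exactly the cosets with $|\Ima(h.z)-\Ima z|<c_6(z)$ and bounded horizontal offset; this lets the paper absorb the horizontal shift $\tau$ multiplicatively via $\sqrt{\tau^2+(y_1-y_0)^2}\le c_7(z)\,|y_1-y_0|$. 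Either route works, but as written your proposal does not yield a finite $H_e(z)$, so the statement is not proved.
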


\begin{proof}
Let 
\[
 \tau\sceq 2\left\lfloor \frac{4}{c(\infty)} \right\rfloor + 2
\]
and let $c_6\colon\h\to\R$ be a continuous map such that $c_6(z)\in (0, \Ima z)$ for all $z\in\h$. Depending on $\tau$ and $c_6$ define the continuous maps $c_5, c_7\colon \h\to\R$ by 
\[
 c_7(z) \sceq 10 \sqrt{\frac{\tau^2}{c_6(z)^2} + 1}
\]
and
\[
 c_5(z) \sceq 2 \log c_7(z).
\]
Let $K\subseteq\h$ be a compact nonempty subset, and choose a compact set $K'\subseteq\h$ such that for all $z\in K$
\[
 \big\{ w\in\h \setmid |\Rea w - \Rea z |< \tau,\  |\Ima w -\Ima z| < c_6(z) \big\} \subseteq K'.
\]
Let 
\[
 \Lambda \sceq \{ h\in\Gamma\mid h.K\cap K'\not=\emptyset\}.
\]
Since $\Gamma$, as a Fuchsian group, acts properly discontinuously on $\h$, the set $\Lambda$ is finite. Let 
\[
 P \sceq \{ [g]\in\Gamma_\infty\backslash\Gamma\mid g\in\Lambda\}.
\]
Let $z = x_0+iy_0\in K$ ($x_0,y_0\in\R$) and $[g]\in\Gamma_\infty\backslash\Gamma$, $[g]\notin P$. Pick a representative~$g$ of $[g]$. Since for each $k\in\Z$, $p_kg$ is a representative of $[g]$ as well and 
\[
 p_kg.z = g.z + k,
\]
Lemma~\ref{enoughhyp} implies that we find at least one hyperbolic representative $h$ of $[g]$ such that
\[
 |\Rea(h.z) - x_0| < \tau.
\]
Since $[g]\notin P$, 
\[
 |\Ima(h.z) - y_0| \geq c_6(z).
\]
Set
\[
 x_1 \sceq \Rea(h.z)\quad\text{and}\quad y_1\sceq \Ima(h.z).
\]
Then (see \cite[p. 80]{Iversen92})
\begin{align*}
d_\h(h.z,z) & = d_\h(x_1+iy_1, x_0+iy_0)
\\
& = \arcosh \left( 1 + \frac{(x_1- x_0)^2 + (y_1 - y_0)^2}{2 y_0  y_1}\right)
\\
& = 2 \log \frac{\sqrt{(x_1-x_0)^2 + (y_1-y_0)^2} + \sqrt{(x_1-x_0)^2 + (y_1+y_0)^2}}{2 \sqrt{y_1y_0}}
\\
& \leq 2 \log \frac{ \sqrt{\tau^2 + (y_1-y_0)^2} + \sqrt{\tau^2 + (y_1+y_0)^2} }{2\sqrt{y_1y_0}}
\\
& \leq 2 \log \frac{ c_7(z) \sqrt{(y_1-y_0)^2} + c_7(z) \sqrt{(y_0+y_1)^2}}{ 2\sqrt{y_1y_0}}
\\
& = c_5(z) +  \left| \log\frac{y_1}{y_0} \right|.
\end{align*}
This completes the proof.
\end{proof}

\begin{proof}[Proof of Theorem~\ref{thm:estimate}]
In Proposition~\ref{prop:frac} pick a continuous map $z\mapsto c_5(z)$ and a map $z\mapsto H_e(z)$ that satisfies the finiteness condition. Let $z\in\h$ and $[g]\in\Gamma_\infty\backslash\Gamma$, $[g]\notin H_e(z)$. Applying Propositions~\ref{prop:hyp} and \ref{prop:frac} (in this order) and using the notation established there we find
\begin{align*}
 \| \chi(g^{-1}) P_\infty\| & \leq c_3(z) \cdot \inf_{h\in\Hyp([g])} e^{c_4 d_\h (h.z, z)}
 \\
  & \leq c_3(z) \cdot \exp\left( c_4\cdot c_5(z) + c_4 \left| \log\frac{\Ima(g.z)}{\Ima(z)}\right| \right).
\end{align*}
Setting $c_2(z)\sceq c_3(z) e^{c_4c_5(z)}$ and $c_1\sceq c_4$ completes the proof. 
\end{proof}

\section{Proofs of Theorem~A and Corollary~C}\label{sec:thmA}

We continue to assume that $\infty$ is a cusp representative of $\Gamma$, and that the cusp width at $\infty$ is $1$. For the proofs of Theorem~A and Corollary~C, it obviously suffices to show the claimed convergence and bounds for the Eisenstein series at the cusp~$\infty$ as stated in the following theorem. 

\begin{thm}\label{thm:conv}
Let $c_1$ be as in Theorem~\ref{thm:estimate}.  Then the Eisenstein series $E_{\infty,\chi}$ converges absolutely and compactly in $\h\times \{s\in\C\mid\Rea s > 1+c_1\}$. 
\end{thm}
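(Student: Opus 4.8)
The plan is to reduce the convergence of $E_{\infty,\chi}(z,s)$ to the convergence of a real, scalar Eisenstein-type series by applying the pointwise operator bound from Theorem~\ref{thm:estimate}, and then to invoke the classical convergence result for the untwisted parabolic Eisenstein series. First I would fix a compact set $K\subseteq\h$ and a compact set $S\subseteq\{s\in\C\mid\Rea s>1+c_1\}$; it suffices to bound $\sum_{[g]}\|(\Ima(g.z))^s\,\chi(g^{-1})P_\infty\|$ uniformly for $(z,s)\in K\times S$. Split the sum over $[g]\in\Gamma_\infty\backslash\Gamma$ into the finite exceptional part $\bigcup_{z\in K}H_e(z)$ (finite by the finiteness condition in Theorem~\ref{thm:estimate}, hence harmless, and the summands depend continuously on $(z,s)$), and the main part where $[g]\notin H_e(z)$.

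For the main part, apply the estimate \eqref{eq:estimate}. Writing $\sigma:=\Rea s$ and using $\|(\Ima(g.z))^s A\|=(\Ima(g.z))^\sigma\|A\|$, the summand is bounded by
\[
 c_2(z)\,(\Ima z)^{-\eps c_1}\,(\Ima(g.z))^{\sigma+\eps c_1}.
\]
I would then observe that in both cases of $\eps$ one has $(\Ima z)^{-\eps c_1}(\Ima(g.z))^{\eps c_1}\le \max\{1,(\Ima(g.z)/\Ima z)^{c_1}\}\le 1+(\Ima(g.z)/\Ima z)^{c_1}$, which for $z\in K$ gives a bound of the form
\[
 \| (\Ima(g.z))^s\chi(g^{-1})P_\infty\| \ll_K\; (\Ima(g.z))^{\sigma} + (\Ima(g.z))^{\sigma+c_1}
\]
with an implied constant uniform over $K\times S$ (using continuity of $c_2$ and compactness of $K$, and $\Ima z$ bounded away from $0$ and $\infty$ on $K$). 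Since on $S$ we have $\sigma+c_1\le \sigma_{\max}$ for some $\sigma_{\max}$ and $\sigma>1$, both terms are dominated by $(\Ima(g.z))^{\sigma_0}+(\Ima(g.z))^{\sigma_{\max}}$ for a suitable $\sigma_0\in(1,1+c_1]$; actually the cleanest route is to note $\sigma>1$ on $S$ so all relevant exponents lie in the interval $(1,\sigma_{\max}]$, and $(\Ima(g.z))^a\le (\Ima(g.z))^{\sigma_{\max}}$ whenever $\Ima(g.z)\ge1$ and is $\le 1$ otherwise, so in all cases the summand is $\ll_K (\Ima(g.z))^{\sigma_{\max}}+1$ — but adding $1$ over infinitely many $[g]$ diverges, so one must instead keep the comparison at exponent $\sigma>1$: bound the summand by $C_K\big((\Ima(g.z))^{\sigma}+(\Ima(g.z))^{\sigma'}\big)$ where $\sigma'=\sigma+c_1>1$ as well, which is legitimate.)

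The final step is to recognize $\sum_{[g]\in\Gamma_\infty\backslash\Gamma}(\Ima(g.z))^{t}$ as the classical (untwisted, weight-zero) parabolic Eisenstein series $E_\infty(z,t)$ for $\Gamma$ at the cusp $\infty$, which is well known to converge absolutely and locally uniformly for $\Rea t>1$ (for any geometrically finite, hence in particular any such, Fuchsian group — this is standard, e.g.\ via comparison with the Epstein zeta function / counting lattice points, and is exactly the $\chi=\mathrm{triv}$ case underlying \cite{Deitmar_Monheim_eisenstein}). Applying this with $t=\sigma$ and $t=\sigma'$, and using that both stay uniformly above $1$ on $S$, yields a uniform bound on the main part of the sum over $K\times S$. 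Combining with the finite exceptional part gives absolute and compact convergence of $E_{\infty,\chi}$ on $\h\times\{\Rea s>1+c_1\}$, proving Theorem~\ref{thm:conv}; Theorem~A and Corollary~C then follow by summing over the finitely many cusps in $\mf C$ and taking $C=\max_{\mf c}C_{\mf c}$. The main obstacle is purely bookkeeping: ensuring all the "implied constants'' are genuinely uniform over the compact set $K$ in $z$ (continuity of $c_2$, $\Ima z$ bounded below) and over $S$ in $s$ (so that the comparison exponents $\Rea s$ and $\Rea s+c_1$ stay in a compact subinterval of $(1,\infty)$), and correctly handling the $\eps=\pm1$ dichotomy so that no spurious constant term is introduced into the infinite sum.
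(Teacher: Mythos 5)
There is a genuine error in your handling of the $\eps=\pm1$ dichotomy, and it is exactly at the step that determines the abscissa $1+c_1$. Your claimed inequality
\[
(\Ima z)^{-\eps c_1}\,(\Ima(g.z))^{\eps c_1}\;\le\;\max\Bigl\{1,\bigl(\tfrac{\Ima(g.z)}{\Ima z}\bigr)^{c_1}\Bigr\}
\]
is false when $\eps=-1$: in that case the left-hand side equals $\bigl(\Ima z/\Ima(g.z)\bigr)^{c_1}>1$, while your right-hand side equals $1$, and the left-hand side is unbounded as $\Ima(g.z)\to0$ (which happens for all but finitely many $[g]$, since $\Ima(g.z)\le 1/(c(\infty)^2\,\Ima z)$ only bounds it from above, not below). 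Consequently your final bound $\|(\Ima(g.z))^s\chi(g^{-1})P_\infty\|\ll_K(\Ima(g.z))^{\sigma}+(\Ima(g.z))^{\sigma+c_1}$ is not justified by Theorem~\ref{thm:estimate}; if it were, your comparison with the untwisted Eisenstein series at exponents $\sigma$ and $\sigma+c_1$ would give convergence already for $\Rea s>1$, which is stronger than what the estimate \eqref{eq:estimate} can deliver and makes the threshold $1+c_1$ in the statement pointless. The issue is that in the dominant regime $\Ima(g.z)<\Ima z$ the operator norm $\|\chi(g^{-1})P_\infty\|$ may grow like $(\Ima(g.z))^{-c_1}$, so the correct comparison exponent there is $\Rea s-c_1$, not $\Rea s$ or $\Rea s+c_1$.

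The repair is the decomposition the paper uses (and which your setup almost forces): split $\Gamma_\infty\backslash\Gamma\setminus H_e(z)$ into $H_+(z)=\{[g]:\Ima(g.z)\ge\Ima z\}$ and $H_-(z)=\{[g]:\Ima(g.z)<\Ima z\}$, and apply \eqref{eq:estimate} separately, obtaining summand bounds $\ll_K(\Ima(g.z))^{\Rea s+c_1}$ on $H_+(z)$ and $\ll_K(\Ima(g.z))^{\Rea s-c_1}$ on $H_-(z)$. Comparing with the classical untwisted series $\sum_{[g]}(\Ima(g.z))^{t}$, which converges locally uniformly for $\Rea t>1$ (as you correctly invoke), the $H_+$ part needs $\Rea s>1-c_1$ and the $H_-$ part needs $\Rea s>1+c_1$; the intersection is the asserted half-plane. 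Everything else in your proposal --- splitting off the finite exceptional set $\bigcup_{z\in K}H_e(z)$, the uniformity of $c_2$ and of $\Ima z$ on compacta, and the reduction of Theorem~A and Corollary~C to the single-cusp statement --- is correct and coincides with the paper's argument.
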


\begin{proof}
For $z\in\h$ choose $H_e(z)$ as in Theorem~\ref{thm:estimate} and set 
\begin{align*}
 H_+(z) &\sceq \big\{ [h] \in \Gamma_\infty\backslash\Gamma \ \big\vert\  [h]\notin H_e(z),\ \Ima(h.z) \geq \Ima z \big\},
 \\
 H_-(z) & \sceq \big\{ [h] \in \Gamma_\infty\backslash\Gamma \ \big\vert\  [h]\notin H_e(z),\ \Ima(h.z) < \Ima z \big\}.
\end{align*}
For $z\in\h$ and $s\in\C$ we (formally) have
\begin{align*}
 E_{\infty,\chi}(z;s) & = \sum_{[g]\in H_e(z)} \big(\Ima(g.z)\big)^s \chi(g^{-1})P_\infty
 \\
  & \ \quad + \sum_{[g]\in H_+(z)} \big(\Ima(g.z)\big)^s \chi(g^{-1})P_\infty + \sum_{[g]\in H_-(z)} \big(\Ima(g.z)\big)^s \chi(g^{-1})P_\infty.
\end{align*}
Theorem~\ref{thm:estimate} shows
\begin{equation}\label{bound1}
 \sum_{[g]\in H_+(z)} \big(\Ima(g.z)\big)^s \chi(g^{-1})P_\infty \ll_z \sum_{[g]\in H_+(z)} \big|\Ima(g.z)\big|^{\Rea s + c_1}
\end{equation}
and
\begin{equation}\label{bound2}
 \sum_{[g]\in H_-(z)} \big(\Ima(g.z)\big)^s \chi(g^{-1})P_\infty \ll_z \sum_{[g]\in H_-(z)} \big|\Ima(g.z)\big|^{\Rea s - c_1}.
\end{equation}
The implied constants depend continuously on $z$. Moreover, the set $H_e(z)$ is finite. 

By Theorem~\ref{thm:estimate}, for any given compact set $K\subseteq \h$, the maps $z\mapsto H_e(z)$ and $z\mapsto H_{\pm}(z)$ can be chosen to be constant on $K$. Therefore, the right hand side of \eqref{bound1} and \eqref{bound2} converges absolutely and uniformly in $K_z\times K_s$ for every compact subsets $K_z\subseteq\h$ and 
\[
 \begin{cases}
  K_s\subseteq\{s \in \C \mid \Rea s>1-c_1\} & \text{for~\eqref{bound1},}
  \\[2mm]
  K_s\subseteq\{s \in \C \mid \Rea s>1+c_1\} & \text{for~\eqref{bound2}.}
 \end{cases}
\]
Thus, the Eisenstein series $E_{\infty,\chi}$ converges absolutely and compactly in 
\[
 \h \times \{s\in\C\mid \Rea s > 1+c_1\}.
\]
This completes the proof.
\end{proof}

\section{Fourier-type expansion of Eisenstein series}\label{sec:expansion}

If the representation $\chi$ is not unitary in some cusp, then the $\chi$-twisted Eisenstein series $E_\chi$ is not periodic along the horocycles centered at this cusp. Thus, it does not have a Fourier expansion in the usual sense.

However, for any pair $\mf a,\mf b\in\mf C$ of cusp representatives we can periodize $E_{\mf a,\chi}$ along the horocyclic direction of the cusp represented by $\mf b$. This periodization allows us to develop a Fourier-type expansion of $E_\chi$.

More precisely, we first note (see Remark~\ref{rem:periodic} below) that the map 
\begin{equation}\label{makeperiodic}
F_{\mf a,\mf b}\colon z\mapsto \chi\big(\sigma_{\mf b}T^{-1}\sigma_{\mf b}^{-1}\big)^{\Rea z}E_{\mf a,\chi}\big(\sigma_{\mf b}.z, s\big),
\end{equation}
where 
\[
 T \sceq p_1 = \bmat{1}{1}{0}{1},
\]
is periodic in the real direction with period~$1$ (if $s$ is in the domain of convergence). We then use the Fourier expansion of $F_{\mf a,\mf b}$ and compose it termwise with 
\[
\chi\big(\sigma_{\mf b}T^{-1}\sigma_{\mf b}^{-1}\big)^{-\Rea z}
\]
in order to revert the periodization in \eqref{makeperiodic}. This results in a Fourier-type expansion of $E_{\mf a,\chi}(\sigma_{\mf b}.\cdot,s)$ and hence gives Fourier-type expansions of $E_{\chi}$ (one for each cusp).

In Theorem~\ref{thm:expansion} below we state these Fourier-type expansions. In the proof of Theorem~\ref{thm:expansion} we present a slightly more direct way than proceeding via periodization. The latter method however is lurking in the background, and we discuss it in a bit more detail in Remark~\ref{rem:periodic} below. 

For the statement of Theorem~\ref{thm:expansion} and its proof we need a few preparations.

Let $\mf a,\mf b\in\mf C$. We let $\mc R(\mf a,\mf b)$ denote the set of pairs $(c,d)\in\R_{>0}\times\R$ such that $d\in [0,c)$ and that there exists $a,b\in\R$ such that 
\[
 \bmat{a}{b}{c}{d}\in \sigma_{\mf a}^{-1}\Gamma\sigma_{\mf b}.
\]
For any $(c,d)\in\mc R(\mf a,\mf b)$ we fix an element 
\[
\omega_{(\mf a,\mf b)}^{(c,d)}\in\sigma_{\mf a}^{-1}\Gamma\sigma_{\mf b}
\]
of the form $\textbmat{*}{*}{c}{d}$. All results will be independent of the choice of the upper row of $\omega_{(\mf a,\mf b)}^{(c,d)}$. To simplify notation, we set
\[
 \omega_{(\mf a,\mf b)}^{-(c,d)}\sceq \left( \omega_{(\mf a,\mf b)}^{(c,d)}\right)^{-1}.
\]
We remark that this notation is safe of misinterpretations since $c$ is always chosen to be positive.

For any cusp representative $\mf c\in\mf C$ let 
\[
 g_{\mf c} \sceq \sigma_{\mf c}T\sigma_{\mf c}^{-1}.
\]
Let $r=r(\mf c)$ denote the number of Jordan chains of $\chi\big(g_{\mf c}^{-1}\big)$, let 
\[
n_j = n_{\mf c,j} \qquad (j = 1, \ldots, r),
\]
be the lengths of the Jordan chains (in any order, fixed once and for all), and fix a basis 
\[
\mc B(\mf c) =\big(e_{\mf c, 1}^{(1)},\ldots, e_{\mf c,n_1}^{(1)}, e_{\mf c, 1}^{(2)},\ldots, e_{\mf c, n_2}^{(2)},\ldots, e_{\mf c,1}^{(r)},\ldots, e_{\mf c,n_r}^{(r)})
\]
of $V$ consisting of Jordan chains of $\chi(g_{\mf c}^{-1})$. For $j\in\{1,\ldots, r\}$ let $\lambda_j=\lambda_{\mf c,j}$ be the eigenvalue of the $j$-th Jordan chain and let
\[
 J_j = 
 \begin{pmatrix}
  \lambda_j & 1 
  \\
  & \lambda_j & 1
  \\
  & & \ddots & \ddots 
  \\
  & & &  \lambda_j & 1
  \\
  & & & & \lambda_j
 \end{pmatrix}  \in \Mat(n_j\times n_j;\C)
\]
be the associated Jordan block. Then the Jordan normal form of $\chi(g_{\mf c}^{-1})$ with respect to the basis $\mc B(\mf c)$ is 
\begin{equation}\label{JNF}
\begin{pmatrix}
J_1 &
\\
& J_2 &
\\
& & \ddots 
\\
& & & J_{r}
\end{pmatrix}.
\end{equation}
Let 
\[
 N(\mf c) \sceq \max_{j=1}^{r(\mf c)} n_{\mf c,j}
\]
denote the maximal length of a Jordan chain.

For $z\in\C$ we set 
\[
 \nte(z)\sceq e^{2\pi i z}.
\]
Since $\chi$ has non-expanding cusp monodromy and $g_{\mf c}$ is parabolic, for each $j\in\{1,\ldots, r\}$, the eigenvalue $\lambda_j$ has absolute value $1$. Let $\mu_j=\mu_{\mf c,j}\in [0,1)$ be such that $\lambda_j = \nte(\mu_j)$.

Let $a\in\R$ and $k\in\N$. For each $p\in\{0,1,\ldots,k\}$ let $b_p(a,k)\in\R$ be the unique real number such that for all $u\in\R$ we have
\begin{equation}\label{stirling}
 \binom{u-a}{k} = \sum_{p=0}^k u^p b_p(a,k).
\end{equation}
Note that $b_p(a,k)$ is, up to scaling, a signed Stirling number of the first kind.

For $r\in\N_0$, $a,y\in\R$, $s\in\C$ with $\Rea s> (r+1)/2$  we set
\begin{equation}\label{def_PHI}
 \Phi(r,a,y,s) \sceq \int_{-\infty}^\infty \frac{x^r \nte(a x)}{ (x^2 + y^2)^s} dx.
\end{equation}
We note that this integral converges. Let $K_\nu$ denote the modified Bessel function of second kind. For details and properties we refer to \cite[Section~8]{Gradshteyn}. By \cite[3.251(2), 8.384(1)]{Gradshteyn} and Lemma~\ref{lem:julie} below we have
\begin{equation}\label{PHI}
\Phi(r,a,y,s) = 
\begin{cases}
y^{1+r-2s} \cfrac{\left( (-1)^r + 1\right) \Gamma\left(\frac{r+1}{2}\right) \Gamma\left(s-\frac{r+1}{2}\right) }{2\Gamma(s)} & \text{for $a=0$}
 \\[4mm]
\cfrac{2\pi^s}{i^r \Gamma(s)} y^{\frac12-s} \cfrac{\partial^r}{\partial a^r}\left( |a|^{s-\frac12} K_{s-\frac12}\big(2\pi|a|y\big)\right) & \text{for $a\not=0$.}
\end{cases}
\end{equation}
For any set $M$ and any elements $a,b\in M$ we set (Kronecker $\delta$-function)
\[
 \delta_{a,b} \sceq  
 \begin{cases}
  1 & \text{if $a=b$}
  \\
  0 & \text{otherwise.}
 \end{cases}
\]
Further, we let $\delta_{\Z}$ denote the indicator function of $\Z$, thus
\[
\delta_{\Z}(t) =
\begin{cases}
 1 & \text{if $t\in\Z$}
 \\
 0 & \text{otherwise.}
\end{cases}
\]

\begin{thm}\label{thm:expansion}
Let $c_1$ be as in Theorem~\ref{thm:estimate}, and let $\mf a,\mf b\in\mf C$. For 
\[
(z,s)\in \h\times\{s\in\C\mid\Rea s > 1+c_1\}
\]
we have
\begin{align*}
E_{\mf a,\chi}&(\sigma_{\mf b}.z,s)  = \delta_{\mf a,\mf b}y^s P_{\mf a} + \sum_{k=0}^{\lfloor (N(\mf b)-1)/2 \rfloor}\varphi_{\mf a,\mf b,2k+1}(s,x)y^{2k+1-s}  
 \\
 & \quad + y^{\frac12}\sum_{t\in\R\setminus\{0\}}\sum_{j=1}^{r(\mf b)} \sum_{k=0}^{n_{\mf b,j}-1} \sum_{n=0}^k  \psi_{\mf a,\mf b}(t,s,j,k,n,x) 
  \frac{\partial^n}{\partial a^n}\Big\vert_{a=t} \left( |a|^{s-\frac12} K_{s-\frac12}(2\pi |a| y) \right)
\end{align*}
where $z = x+iy$ with $x,y\in\R$, and
\begin{align*}
\varphi_{\mf a,\mf b,q}(s, x) &= \frac{\Gamma\left(\frac{q}{2}\right)\Gamma\left(s-\frac{q}{2}\right)}{\Gamma(s)}\sum_{j=1}^{r(\mf b)} \delta\big(\mu_{\mf b,j},0\big)
\\
& \qquad\qquad \times \sum_{k=q-1}^{n_{\mf b,j}-1}   \sum_{(c,d)\in\mc R(\mf a,\mf b)} c^{-2s}b_{q-1}\left( x +\tfrac{d}{c},k\right)Q_{\mf a,\mf b}\big( (c,d), 1, j\big),
\\
\psi_{\mf a,\mf b}(t,s,j,k,n,x) & = \frac{2 \pi^s}{i^n\Gamma(s)} \delta_\Z\big(\mu_{\mf b,j}+t\big) \nte\big( - k\mu_{\mf b,j}\big) 
\\
& \hphantom{\times} \times \sum_{(c,d)\in\mc R(\mf a,\mf b)} \nte\left(\left(x+\tfrac{d}{c}\right)t\right) b_n\left( x+\tfrac{d}{c},k\right) c^{-2s} Q_{\mf a,\mf b}\big( (c,d), k+1, j\big),
\\
Q_{\mf a,\mf b}\big( (c,d), m, j\big) &= \left\langle e_{\mf b,m}^{(j)}, \chi\big(\sigma_{\mf b}\omega_{(\mf a,\mf b)}^{-(c,d)}\sigma_{\mf a}^{-1}\big)P_{\mf a}\right\rangle e_{\mf b,m}^{(j)},
\end{align*}
where $\langle \cdot , \cdot \rangle$ denotes the inner product on $V$.
\end{thm}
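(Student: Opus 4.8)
The plan is to expand the definition of $E_{\mf a,\chi}(\sigma_{\mf b}.z,s)$ by choosing, for each cusp representative $\mf a$, the scaling matrix $\sigma_{\mf a}$ and parametrizing the coset space $\Gamma_{\mf a}\backslash\Gamma$ via the bottom rows $(c,d)$ of matrices in $\sigma_{\mf a}^{-1}\Gamma\sigma_{\mf b}$. The coset $\Gamma_{\mf a}$ on the left acts by left multiplication by $\textbmat{1}{\Z}{0}{1}$, which shifts $d$ by integer multiples of $c$; the residual freedom $\Gamma_{\mf b}$-action (coming from $\sigma_{\mf b}$) corresponds to the sum over $k\in\Z$ shifting $x\mapsto x+k$. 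Thus the sum $\sum_{[g]\in\Gamma_{\mf a}\backslash\Gamma}$ splits as: the identity coset (present only when $\mf a=\mf b$, contributing $\delta_{\mf a,\mf b}y^sP_{\mf a}$), plus a double sum over $(c,d)\in\mc R(\mf a,\mf b)$ and $k\in\Z$ of terms $\big(\Ima(\omega^{-(c,d)}.(z+k))\big)^s\,\chi\big(\sigma_{\mf b}\omega^{-(c,d)}\sigma_{\mf a}^{-1}\big)P_{\mf a}$, using $\Ima(\omega^{-(c,d)}.w)=\Ima w/|cw+d|^2$.

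Next I would carry out the $k$-summation as a Poisson-type / Fourier computation. Fixing $(c,d)$, write $w=z+k=x+k+iy$, so the $k$-sum is $\sum_{k\in\Z} y^s |c(x+k)+d+icy|^{-2s}\,\chi(\cdots)P_{\mf a}$, but now the operator $\chi\big(\sigma_{\mf b}\omega^{-(c,d)}\sigma_{\mf a}^{-1}\big)$ must first be conjugated: shifting $k$ corresponds to replacing $\omega^{-(c,d)}$ by $\omega^{-(c,d)}T^{-k}$ on the right (or $T^k$ on $\sigma_{\mf b}$-side), which multiplies the operator by $\chi(g_{\mf b}^{-1})^{k}=\chi(\sigma_{\mf b}T^{-1}\sigma_{\mf b}^{-1})^k$. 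This is precisely the periodization obstruction: the summand is \emph{not} periodic in $k$ but transforms by the unipotent-plus-unimodular factor $\chi(g_{\mf b}^{-1})^k$. The key device is to expand in the Jordan basis $\mc B(\mf b)$: on the $j$-th Jordan chain, $\chi(g_{\mf b}^{-1})^k$ acts as $J_j^k$, whose entries are $\binom{k}{\ell}\lambda_j^{k-\ell}=\binom{k}{\ell}\nte(k\mu_j)\nte(-\ell\mu_j)$. Substituting $u=k$ into the polynomial identity \eqref{stirling} turns $\binom{k}{\ell}$ (after absorbing the $x+d/c$ shift) into $\sum_p k^p b_p(x+d/c,\ell)$; the scalar $\nte(k\mu_j)$ combines with the Fourier frequency. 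Then I apply Poisson summation (or directly recognize the sum over $k$ of $k^p\nte(k\mu_j)|c(x+k)+d+icy|^{-2s}$ as $\sum_{t\in\Z}$ of $c^{-2s}\Phi(p,\,(t+\mu_j)/c\cdot c,\,y,\,s)\nte((x+d/c)(t+\mu_j))$-type integrals after the substitution $x'=c(x+k)+d$), yielding the Bessel/Gamma transforms recorded in \eqref{PHI} and \eqref{def_PHI}.

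The remaining steps are bookkeeping: separate the $t+\mu_j\in\Z$ constant term (which forces $\mu_j=0$ and gives the polynomial-in-$y$ terms $\varphi_{\mf a,\mf b,q}$ via the $a=0$ case of \eqref{PHI}, with $q=2k+1$ odd because $((-1)^r+1)$ vanishes for even $r$) from the $t\neq 0$ oscillatory terms (which give the Bessel-$K$ sum), re-index with $t\leftrightarrow -\mu_{\mf b,j}+(\text{integer})$ so that $\delta_\Z(\mu_{\mf b,j}+t)$ appears, and collect the constants $\frac{2\pi^s}{i^n\Gamma(s)}$, the Jordan-chain projections $Q_{\mf a,\mf b}((c,d),m,j)=\langle e_{\mf b,m}^{(j)},\chi(\cdots)P_{\mf a}\rangle e_{\mf b,m}^{(j)}$, and the $\nte(-k\mu_{\mf b,j})$ factors coming from $J_j^k$. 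Uniform absolute convergence on compacta throughout $\Rea s>1+c_1$ — which justifies all the rearrangements and the interchange of $\sum_{(c,d)}$, $\sum_k$, and integration — follows from Theorem~\ref{thm:conv} (applied at the cusp $\mf a$) together with standard decay of $K_{s-1/2}$ and of the $\Phi$-integrals.

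The main obstacle is the Jordan-block bookkeeping in the previous paragraph: correctly tracking how the non-semisimple part of $\chi(g_{\mf b}^{-1})$ interacts with the Fourier/Poisson summation, in particular getting the binomial coefficients $\binom{k}{\ell}$, the Stirling expansion \eqref{stirling}, the shift by $d/c$, and the eigenvalue phases $\nte(-\ell\mu_{\mf b,j})$ all placed consistently so that the final answer is manifestly independent of the chosen upper rows of the $\omega_{(\mf a,\mf b)}^{(c,d)}$ and of the Jordan basis within each chain's span. Verifying \eqref{PHI} (the $a\neq 0$ case, attributed to Lemma~\ref{lem:julie}) and checking that differentiating $|a|^{s-1/2}K_{s-1/2}(2\pi|a|y)$ in $a$ exactly produces the $x^r$-weighted integrals in \eqref{def_PHI} is a secondary technical point, but it is essentially a reference computation with Bessel functions rather than a conceptual difficulty.
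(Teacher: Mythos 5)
Your proposal follows essentially the same route as the paper's proof: the parametrization of $\Gamma_{\mf a}\backslash\Gamma$ by bottom rows $(c,d)$ together with a residual $\Z$-translation sum is exactly the double coset decomposition \eqref{doublecoset} (Iwaniec's Theorem~2.7), and the subsequent steps --- expansion in the Jordan basis $\mc B(\mf b)$ with entries $\binom{m}{k}\lambda_{\mf b,j}^{m-k}$, the Stirling expansion \eqref{stirling}, Poisson summation, and the evaluation \eqref{PHI} of the integrals \eqref{def_PHI} via Gamma factors and $K$-Bessel functions (Lemma~\ref{lem:julie}) --- coincide with the paper's argument, with only the immaterial difference that you expand the binomial in powers before applying Poisson summation rather than after. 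The one point you leave implicit (and which the paper states explicitly) is that the identities \eqref{PHI} for all occurring degrees $p\le N(\mf b)-1$ are available on the half-plane $\Rea s>1+c_1$ because $c_1>N(\mf b)-1$, a fact inherited from the way $c_1$ arises through Proposition~\ref{prop:hyp}; your appeal to ``standard decay of the $\Phi$-integrals'' silently uses this.
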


\begin{remark}
The structure of the maps $\psi_{\mf a,\mf b}$ is one of the reasons why we call the expansion in Theorem~\ref{thm:expansion} to be of Fourier-type. The contribution $\nte\left(\left(x+\tfrac{d}{c}\right)t\right)$ is the standard part of a Fourier(-type) expansion, compare with \cite[Theorem~3.1.2, 3.1.3]{Venkov_book} or  \cite[Theorem~3.6]{Deitmar_Monheim_eisenstein}. It is here perturbed by the factor $b_n\left( x+\tfrac{d}{c},k\right)$. This factor only arises in those directions where the representation $\chi$ is not unitary in $\mf b$.

If $\chi$ is unitary in $\mf b$, then the expansion in Theorem~\ref{thm:expansion} is identical to the Fourier expansion developed in \cite[Theorem~3.6]{Deitmar_Monheim_eisenstein}. For unitary $\chi$ it is identical to the classical Fourier expansion of Eisenstein series with unitary twists in \cite[Theorem~3.1.2, 3.1.3]{Venkov_book}.
\end{remark}

\begin{proof}[Proof of Theorem~\ref{thm:expansion}]
Since the cusp representatives $\mf a,\mf b\in\mf C$ are considered to be fixed throughout this proof, we omit in the notation of several objects the dependence on $\mf a$ and $\mf b$ whenever misunderstandings are impossible. In particular, we set 
\[
 \mc R\sceq \mc R(\mf a,\mf b).
\]
Let 
\[
 \Omega_\infty\sceq \Lambda \sceq \bmat{1}{\Z}{0}{1} =\left\{ \bmat{1}{n}{0}{1}   \ \left\vert\ n\in\Z \vphantom{\bmat{1}{n}{0}{1} }\right.\right\}.
\]
For $(c,d)\in\mc R$ we set
\[
 \omega_{(c,d)}\sceq \omega_{(\mf a,\mf b)}^{(c,d)}
\]
and
\[
 \Omega_{(c,d)} \sceq \Lambda \omega_{(\mf a,\mf b)}^{(c,d)}\Lambda.
\]
By \cite[Theorem~2.7]{Iwaniec_book} we have the double coset decomposition 
\begin{equation}\label{doublecoset}
 \sigma_{\mf a}^{-1}\Gamma\sigma_{\mf b} = \delta_{\mf a,\mf b}\Omega_\infty \cup \bigcup_{(c,d)\in\mc R} \Omega_{(c,d)}.
\end{equation}
Note that even though the statement of \eqref{doublecoset} in \cite{Iwaniec_book} is restricted to nonuniform cofinite Fuchsian groups, it and its proof in \cite{Iwaniec_book} apply---without any changes---to non-cofinite Fuchsian groups with cusps as well.

One easily sees that 
\[
 \Gamma_{\mf a}\backslash\Gamma = \Lambda\backslash \sigma_{\mf a}^{-1}\Gamma\sigma_{\mf b}.
\]
Thus,  
\begin{align*}
 E_{\mf a,\chi}(\sigma_{\mf b}.z,s) & = \sum_{[g]\in\Gamma_{\mf a}\backslash\Gamma} \big(\Ima \sigma_{\mf a}^{-1}g\sigma_{\mf b}.z\big)^s \chi(g^{-1})P_{\mf a}
 \\
 & = \sum_{[h]\in \Lambda\backslash \sigma_{\mf a}^{-1}\Gamma \sigma_{\mf b}} \big( \Ima h.z \big)^s \chi\big( \sigma_{\mf b}h^{-1}\sigma_{\mf a}^{-1}\big) P_{\mf a}
 \\
 & = \delta_{\mf a,\mf b} y^s P_{\mf a} + \sum_{(c,d)\in\mc R} \sum_{[h]\in\Lambda\backslash \Omega_{(c,d)}} \big(\Ima h.z\big)^s \chi\big(\sigma_{\mf b} h^{-1} \sigma_{\mf a}^{-1}\big) P_{\mf a}
 \\
 & = \delta_{\mf a,\mf b} y^s P_{\mf a} 
 \\
 & \ \quad+ \sum_{(c,d)\in\mc R} \sum_{m\in\Z} \big(\Ima \omega_{(c,d)}.(z+m)\big)^s \chi\big(g_{\mf b}^{-1}\big)^m \chi\big(\sigma_{\mf b}\omega_{(c,d)}^{-1}\sigma_{\mf a}^{-1}\big)P_{\mf a}.
\end{align*}
In order to investigate the latter sum in more detail and to separate the contributions of the real part $x$ and the imaginary part $y$ of $z$, we express its summands with respect to the basis $\mc B(\mf b)$. With respect to this basis, the representing matrix of $\chi\big(g_{\mf b}^{-1}\big)$ is in Jordan normal form, see \eqref{JNF}. In these coordinates we find
\begin{align}\label{Eis_coord}
E_{\mf a,\chi}(\sigma_{\mf b}.z,s) & = \delta_{\mf a,\mf b} y^s P_{\mf a} 
+ \sum_{(c,d)\in\mc R} \sum_{j=1}^{r(\mf b)} \sum_{k=0}^{n_{\mf b,j}-1} \nu_{\mf b}\big((c,d),z,k,j\big) Q_{\mf a,\mf b}\big( (c,d), k+1, j\big)
\end{align}
where
\begin{equation}\label{nu}
 \nu_{\mf b}\big((c,d),z,k,j\big) \sceq \sum_{m\in\Z}\big(\Ima \omega_{(c,d)}.(z+m)\big)^s \binom{m}{k} \lambda_{\mf b,j}^{m-k},
\end{equation}
\[
 Q_{\mf a,\mf b}\big( (c,d), k+1, j\big) \sceq \left\langle e_{\mf b,k+1}^{(j)}, \chi\big(\sigma_{\mf b}\omega_{(c,d)}^{-1}\sigma_{\mf a}^{-1}\big)P_{\mf a}\right\rangle e_{\mf b,k+1}^{(j)}.
\]
In what follows we deduce another expression for $\nu_{\mf b}$ which allows us to separate the contributions of $x$ and $y$. 

Recall that $\mu_{\mf b,j}\in [0,1)$ is chosen such that $\lambda_{\mf b,j} = \nte\big( \mu_{\mf b,j} \big)$. The Poisson summation formula implies
\begin{align*}
 \nu_{\mf b}\big((c,d),z,k,j\big) & = \sum_{m\in\Z}\big(\Ima \omega_{(c,d)}.(z+m)\big)^s \binom{m}{k} \nte\big((m-k)\mu_{\mf b,j}\big)
 \\
 & = \sum_{\ell\in\Z} \int_\R \big(\Ima \omega_{(c,d)}.(z+t)\big)^s \binom{t}{k} \nte\big((t-k)\mu_{\mf b,j}\big)e(-t\ell)  \, dt
 \\
 & = \sum_{\ell\in\Z}\nte\big(-k\mu_{\mf b,j}\big) \nte\left( \left(x+\tfrac{d}{c}\right)(\ell-\mu_{\mf b,j})\right) 
 \\
 & \qquad \times \int_\R \binom{u-x-\frac{d}{c}}{k} \left(\frac{y}{c^2(u^2+y^2)}\right)^s \nte\big( u(\mu_{\mf b,j}-\ell) \big)\, du,
\end{align*}
where, for the last equality, we used the substitution 
\[
u(t) = t+x+\frac{d}{c}.
\]
Using \eqref{stirling} and \eqref{def_PHI} we get
\begin{align*}
\nu_{\mf b}\big((c,d),z,k,j\big) & = \sum_{\ell\in\Z}\nte\big(-k\mu_{\mf b,j}\big) \nte\left( \left(x+\tfrac{d}{c}\right)(\ell-\mu_{\mf b,j})\right) 
 \frac{y^s}{c^{2s}} 
 \\
  & \qquad\qquad\qquad \times \sum_{p=0}^k b_p\big( x + d/c, k\big) \int_\R \frac{u^p \nte\left( u\big(\mu_{\mf b,j} - \ell\big) \right) }{ \big( u^2 + y^2 \big)^s} \, du
\\
& = \sum_{\ell\in\Z}\nte\big(-k\mu_{\mf b,j}\big) \nte\left( \left(x+\tfrac{d}{c}\right)(\ell-\mu_{\mf b,j})\right) \frac{y^s}{c^{2s}} 
\\
  & \qquad\qquad\qquad \times
\sum_{p=0}^k b_p\big( x + d/c, k\big) \Phi\big(p,\mu_{\mf b,j}-\ell, y,s\big).
\end{align*}
Recall that the possible choices for the constant $c_1$ in the statement of Theorem~\ref{thm:estimate} are governed by the restrictions implied by Proposition~\ref{prop:hyp}, and hence by the restrictions given by \cite[Corollary~3.4]{FP_szf}. These restrictions yield that $c_1> N(\mf b)-1$, and hence $\Rea s > (p+1)/2$ for all values of $s$ that we consider here. In turn, $\Phi(p,\cdot,\cdot, s)$ is indeed well-defined. 
Applying \eqref{PHI} we find 
\begin{align*}
\nu_{\mf b}&\big((c,d),z,k,j\big)
= \sum_{\ell\in\Z}\nte\big(-k\mu_{\mf b,j}\big) \nte\left( \left(x+\tfrac{d}{c}\right)(\ell-\mu_{\mf b,j})\right) c^{-2s}
\\
& \quad\times \Bigg[ \delta\big(\mu_{\mf b,j},\ell\big) \sum_{p=0}^k y^{p+1-s} b_p(x+d/c,k) \frac{(-1)^p + 1}{2} \frac{\Gamma\left(\frac{p+1}{2}\right)\Gamma\left(s-\frac{p+1}{2}\right) }{\Gamma(s)}
\\
& \quad \qquad + \left(1-\delta\big(\mu_{\mf b,j},\ell\big)\right) y^{\frac12} \frac{2\pi^s}{\Gamma(s)} \sum_{n=0}^k i^{-n} b_n(x+d/c,k)
\\
& \hphantom{\left(1-\delta\big(\mu_{\mf b,j},\ell\big)\right) y^{\frac12} \frac{2\pi^s}{\Gamma(s)} \sum_{n=0}^k i^{-n}} 
\times \frac{\partial^n}{\partial a^n}\Big\vert_{a=\mu_{\mf b,j}-\ell} \left( |a|^{s-\frac12} K_{s-\frac12}\big(2\pi|a|y\big)\right) \Bigg].
\end{align*}
 
Using this expression for $\nu_{\mf b}$ in \eqref{Eis_coord} and reordering sums completes the proof.
\end{proof}

In the introduction to this section we sketched the possibility to prove Theorem~\ref{thm:expansion} using a periodization of the Eisenstein series at cusps.  In what follows we provide a few more details of this approach. In a forthcoming publication we will use this approach to develop Fourier-type expansion in a much more general setup.

One key ingredient is the following result on the transformation behavior of twisted Eisenstein series. It is a generalization of \cite[Theorem 3.1.1(1)]{Venkov_book}.

\begin{lemma}\label{Eisentrans}
Let $c_1$ be as in Theorem~\ref{thm:estimate}, $\mf c\in\mf C$ and $\gamma\in\Gamma$. For all $(z,s)\in\h\times\C$ with $\Rea s>1+c_1$ we have
\[
 E_{\mf c,\chi}(\gamma.z,s) = \chi(\gamma)E_{\mf c,\chi}(z,s).
\]
\end{lemma}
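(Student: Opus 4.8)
The plan is to prove the transformation law $E_{\mf c,\chi}(\gamma.z,s) = \chi(\gamma)E_{\mf c,\chi}(z,s)$ directly from the series definition~\eqref{defEisencusp} by a reindexing argument, exactly as in the classical (untwisted or unitarily twisted) case. First I would note that we work in the region $\Rea s > 1+c_1$, where Theorem~\ref{thm:conv} (together with the translation-invariance of the cusps other than $\infty$, handled by conjugating with $\sigma_{\mf c}$) guarantees that $E_{\mf c,\chi}(z,s)$ converges absolutely; absolute convergence is what licenses the rearrangement of summands that the argument needs. Actually, one should first remark that $E_{\mf c,\chi}(\gamma.z,s)$ also converges absolutely: substituting $\gamma.z$ for $z$ just permutes the points $\{\sigma_{\mf c}^{-1}g.z : [g]\in\Gamma_{\mf c}\backslash\Gamma\}$, since $\{g\gamma : [g]\in\Gamma_{\mf c}\backslash\Gamma\}$ is again a full set of coset representatives, so absolute convergence at $z$ transfers to $\gamma.z$.

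The core computation is the substitution $[g]\mapsto[g\gamma^{-1}]$ (equivalently, writing $g = h\gamma$ with $[h]$ ranging over $\Gamma_{\mf c}\backslash\Gamma$). Starting from
\[
 E_{\mf c,\chi}(\gamma.z,s) = \sum_{[g]\in\Gamma_{\mf c}\backslash\Gamma} \big(\Ima(\sigma_{\mf c}^{-1}g\gamma.z)\big)^s \chi(g^{-1})P_{\mf c},
\]
I would replace $g$ by $h\gamma$; the map $[h]\mapsto[h\gamma]$ is a bijection of $\Gamma_{\mf c}\backslash\Gamma$ onto itself (right multiplication by $\gamma$ is well-defined on left cosets and invertible), so the sum becomes
\[
 \sum_{[h]\in\Gamma_{\mf c}\backslash\Gamma} \big(\Ima(\sigma_{\mf c}^{-1}h\gamma\gamma^{-1}.z)\big)^s \chi(\gamma^{-1}h^{-1})P_{\mf c}
 = \sum_{[h]\in\Gamma_{\mf c}\backslash\Gamma} \big(\Ima(\sigma_{\mf c}^{-1}h.z)\big)^s \chi(\gamma^{-1})\chi(h^{-1})P_{\mf c}.
\]
Here I used $\chi(\gamma^{-1}h^{-1}) = \chi(\gamma^{-1})\chi(h^{-1})$ (homomorphism property) and that $\gamma\gamma^{-1}.z = z$. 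Pulling the constant operator $\chi(\gamma^{-1})$ out of the (absolutely convergent, hence unconditionally summable in $\End(V)$) series and replacing $\chi(\gamma^{-1})$ by $\chi(\gamma)$—wait, that is wrong sign; let me instead run the substitution as $g = h\gamma^{-1}$, i.e.\ $h = g\gamma$, giving $\sigma_{\mf c}^{-1}g\gamma.z = \sigma_{\mf c}^{-1}h.z$ and $\chi(g^{-1}) = \chi(\gamma h^{-1}) = \chi(\gamma)\chi(h^{-1})$, so that factoring out yields $\chi(\gamma)E_{\mf c,\chi}(z,s)$ as claimed.

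The only genuinely non-trivial point—and hence the "main obstacle," though it is mild—is justifying that the reindexing and the extraction of $\chi(\gamma)$ from the sum are legitimate, i.e.\ that absolute convergence in $\End(V)$ makes the series unconditionally summable so that both operations preserve the value. This is where Theorem~\ref{thm:conv}/Theorem~A is invoked. One should also double-check the well-definedness caveats already noted after~\eqref{defEisencusp}: the summand $\chi(g^{-1})P_{\mf c}$ depends only on the coset $[g]$, so the bijection $[h]\mapsto[h\gamma^{-1}]$ of $\Gamma_{\mf c}\backslash\Gamma$ is all that is needed and no choice of representatives intervenes. Everything else is the routine algebraic identity above, and I would present it in three displayed lines with a sentence of justification for the rearrangement.
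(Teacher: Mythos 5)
Your proof is correct and follows essentially the same route as the paper: the reindexing $h = g\gamma$ (after your self-correction of the sign), the identity $\chi(g^{-1}) = \chi(\gamma)\chi(h^{-1})$, and factoring $\chi(\gamma)$ out of the series is exactly the paper's computation. Your extra remarks on absolute convergence and coset well-definedness are sound but not spelled out in the paper's (terser) proof.
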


\begin{proof} We have
\begin{align*}
 E_{\mf c,\chi}(\gamma.z,s) & = \sum_{[g]\in\Gamma_{\mf c}\backslash\Gamma} \left( \Ima\sigma_{\mf c}^{-1} g\gamma.z\right)^s  \chi\big(g^{-1}\big)\circ P_{\mf c}
 \\
 & = \sum_{[h]\in\Gamma_{\mf c}\backslash\Gamma} \left(\Ima\sigma_{\mf c}^{-1}h.z\right)^s \chi\big(\gamma h^{-1}\big)\circ P_{\mf c}
 \\
 & = \sum_{[h]\in\Gamma_{\mf c}\backslash\Gamma} \left(\Ima\sigma_{\mf c}^{-1}h.z\right)^s \chi(\gamma)\chi\big(h^{-1}\big)\circ P_{\mf c}
 \\
 & = \chi(\gamma) E_{\mf c,\chi}(z,s).
\end{align*}
This completes the proof.
\end{proof}

Finally we provide the periodization.

\begin{remark}\label{rem:periodic}
Let the notation be as for Theorem~\ref{thm:expansion}, and let 
\[
 B_{\mf b}\colon \R \to \End(V),\quad x\mapsto \chi(g_{\mf b})^{-x}
\]
be the (unique) group homomorphism which extends continuously the group homomorphism 
\[
 \Z\to \End(V),\quad n\mapsto \chi(g_{\mf b})^{-n}.
\]
Define
\[
 F(z) \sceq B_{\mf b}(x)E_{\mf a,\chi}(\sigma_{\mf b}.z,s).
\]
We have
\begin{align*}
E_{\mf a,\chi}(\sigma_{\mf b}.(z+1), s) & = E_{\mf a,\chi}(\sigma_{\mf b}T.z, s)
\\
& = E_{\mf a,\chi}(\sigma_{\mf b}T\sigma_{\mf b}^{-1}\sigma_{\mf b}.z, s)
\\
& = E_{\mf a,\chi}(g_{\mf b}\sigma_{\mf b}.z, s)
\\
& = \chi(g_{\mf b}) E_{\mf a,\chi}(\sigma_{\mf b}.z, s).
\end{align*}
Therefore, 
\begin{align*}
 F(z+1) & = B_{\mf b}(x+1) E_{\mf a,\chi}(\sigma_{\mf b}.(z+1),s)
 \\
 & = \chi(g_{\mf b})^{-(x+1)}\chi(g_{\mf b}) E_{\mf a,\chi}(\sigma_{\mf b}.z, s)
 \\
 & = B_{\mf b}(x) E_{\mf a,\chi}(\sigma_{\mf b}.z, s)
 \\
 & = F(z).
\end{align*}
Thus, $F$ is $1$-periodic. Now we can proceed as in the proof of Theorem~\ref{thm:expansion} to develop the Fourier expansion of $F$. If we multiply this Fourier expansion with 
\[
B_{\mf b}(-x) = B_{\mf b}^{-1}(x)
\]
then we find again the expansions of $E_{\mf a,\chi}(\sigma_{\mf b}.z,s)$ as stated in 
Theorem~\ref{thm:expansion}.
\end{remark}

\appendix

\section{}\label{Julie_lemma}

In the proof of Theorem~\ref{thm:expansion} the integral  
\begin{equation}\label{Phi_def}
 \Phi(r,a,y,s) = \int_{-\infty}^\infty \frac{x^r \nte(a x)}{ (x^2 + y^2)^s} dx
\end{equation}
arose, which we substituted with the expressions \eqref{PHI}. For $a=0$, the equivalence of \eqref{Phi_def} and \eqref{PHI} is given by \cite[3.251(2), 8.384(1)]{Gradshteyn}. For the remaining cases, the equivalence was kindly shown to us by Julie Rowlett. In the following we provide her proof.

\begin{lemma}[J.\@ Rowlett]\label{lem:julie}
For any $r\in\N_0$, $a\in\R$, $a\not=0$, $y\in\R$ and $s\in\C$ with $\Rea s> (r+1)/2$ we have
\[
 \Phi(r,a,y,s) = 
\cfrac{2\pi^s}{i^r \Gamma(s)} y^{\frac12-s} \cfrac{\partial^r}{\partial a^r}\left( |a|^{s-\frac12} K_{s-\frac12}\big(2\pi|a|y\big)\right).
\]
\end{lemma}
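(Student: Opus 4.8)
The plan is to start from the case $a=0$ result, which is classical (Gradshteyn--Ryzhik 3.251(2), 8.384(1)): one knows that
\[
 \Phi(0,a,y,s) = \int_{-\infty}^\infty \frac{\nte(ax)}{(x^2+y^2)^s}\,dx = \frac{2\pi^s}{\Gamma(s)}\,y^{\frac12-s}\,|a|^{s-\frac12}K_{s-\frac12}\big(2\pi|a|y\big)
\]
for $a\neq0$ as well (this is the standard Fourier transform of $(x^2+y^2)^{-s}$, valid for $\Rea s>1/2$). The key observation is then that $x^r\nte(ax) = \big(\tfrac{1}{2\pi i}\big)^r \tfrac{\partial^r}{\partial a^r}\nte(ax)$, so formally
\[
 \Phi(r,a,y,s) = \Big(\frac{1}{2\pi i}\Big)^r \frac{\partial^r}{\partial a^r}\,\Phi(0,a,y,s).
\]
Plugging in the closed form for $\Phi(0,a,y,s)$ and absorbing the constant $(2\pi)^{-r}$ into the $\pi^s$ (noting $\tfrac{2\pi^s}{\Gamma(s)}(2\pi)^{-r}$ interacts with the $y^{1/2-s}$ prefactor correctly after we move the $r$-fold derivative onto the bracket $|a|^{s-1/2}K_{s-1/2}(2\pi|a|y)$) yields exactly the claimed formula, with the $i^{-r}=i^r/i^{2r}$ bookkeeping producing the $1/i^r$ in the statement. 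So the mathematical content is entirely the justification of differentiating under the integral sign $r$ times with respect to the parameter $a$.

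First I would record the $a=0$ identity as the base case and note it extends to $a\neq 0$; alternatively, and more self-containedly, I would directly verify $\Phi(0,a,y,s)$ for $a\neq 0$ by a contour-integration / Bessel-integral-representation argument, but since the paper already cites Gradshteyn for $a=0$ and only the $a\neq0$, $r\geq1$ cases are at issue, the cleanest route is: establish $\Phi(0,a,y,s)$ for all real $a$ first. Then, second, I would set up the differentiation-under-the-integral step: fix $y\in\R$ (WLOG $y>0$; the integrand depends on $y^2$ so the sign is irrelevant) and $s$ with $\Rea s>(r+1)/2$, and consider $a$ ranging over a neighbourhood of a fixed nonzero point. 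The integrand $F(x,a)=x^r\nte(ax)(x^2+y^2)^{-s}$ and its $a$-derivatives $\partial_a^k F(x,a) = (2\pi i)^k x^{r+k}\nte(ax)(x^2+y^2)^{-s}$ for $k=0,\dots,r$ are all dominated, uniformly in $a$, by $|x|^{2r}(x^2+y^2)^{-\Rea s}$ up to a constant, which is integrable over $\R$ precisely because $2r - 2\Rea s < -1$, i.e. $\Rea s > (2r+1)/2$. Hmm — that is a slightly stronger hypothesis than $\Rea s>(r+1)/2$; to get the sharp hypothesis one should differentiate one step at a time, so that at the $k$-th differentiation the relevant dominating function is $|x|^{r+k}(x^2+y^2)^{-\Rea s}$, integrable for $\Rea s>(r+k+1)/2$, and one only needs $k\leq r$, giving $\Rea s>(r+1)/2$ only after using that intermediate derivatives $\Phi(j,a,y,s')$ exist for the shifted exponents $s'$. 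I would therefore phrase the induction on $r$: assuming the formula for $r-1$ (with the hypothesis $\Rea s>r/2$, which covers $\Rea s>(r+1)/2$), differentiate the integral defining $\Phi(r-1,a,y,s)$ once in $a$ — justified by the standard dominated-convergence criterion since $\partial_a[x^{r-1}\nte(ax)(x^2+y^2)^{-s}] = 2\pi i\,x^{r}\nte(ax)(x^2+y^2)^{-s}$ is dominated locally uniformly in $a$ by $C|x|^{r}(x^2+y^2)^{-\Rea s}\in L^1(\R)$ when $\Rea s>(r+1)/2$ — to obtain $2\pi i\,\Phi(r,a,y,s) = \partial_a\Phi(r-1,a,y,s)$, then apply the inductive closed form and the product/chain rule.

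The main obstacle, such as it is, is purely the convergence/differentiation bookkeeping: making sure the hypothesis $\Rea s>(r+1)/2$ is exactly what is needed at each inductive step and that the dominating functions are genuinely in $L^1$ and locally uniform in the parameter $a$ near any nonzero point. There is no singularity issue at $a=0$ for the integral itself, but the closed-form expression on the right-hand side, $\tfrac{\partial^r}{\partial a^r}\big(|a|^{s-1/2}K_{s-1/2}(2\pi|a|y)\big)$, does involve $|a|$, so one should take $a\neq0$ precisely so that $a\mapsto|a|^{s-1/2}K_{s-1/2}(2\pi|a|y)$ is smooth there (it is real-analytic in $a$ on $\R\setminus\{0\}$, since $z\mapsto z^{s-1/2}K_{s-1/2}(2\pi z y)$ is analytic for $\Rea z>0$ and $|a|$ is smooth away from $0$). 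Finally I would note that the numerical constant matches: starting from $\Phi(0,a,y,s) = \tfrac{2\pi^s}{\Gamma(s)}y^{1/2-s}|a|^{s-1/2}K_{s-1/2}(2\pi|a|y)$ and applying $(2\pi i)^{-r}\partial_a^r$ gives $\tfrac{2\pi^s}{\Gamma(s)}(2\pi i)^{-r}y^{1/2-s}\partial_a^r(|a|^{s-1/2}K_{s-1/2}(2\pi|a|y))$, and $(2\pi)^{-r}$ does \emph{not} appear in the target formula — so actually the constant in the statement silently rescales, which means the correct reading is $\Phi(0,a,y,s)=\tfrac{2\pi^s}{\Gamma(s)}y^{1/2-s}|a|^{s-1/2}K_{s-1/2}(2\pi|a|y)$ combined with the identity $x^r\nte(ax)=i^{-r}\tfrac{\partial^r}{\partial a^r}\nte(ax)$ \emph{without} the $(2\pi)^{-r}$ — i.e. one uses $\tfrac{d}{da}\nte(ax) = 2\pi i x\,\nte(ax)$ so $x^r\nte(ax) = (2\pi i)^{-r}\partial_a^r\nte(ax)$, and the factor $(2\pi)^{-r}$ must be absorbed; checking this arithmetic carefully against the stated $\tfrac{2\pi^s}{i^r\Gamma(s)}$ is the one place to be careful, and I would resolve it by recomputing $\Phi(1,a,y,s)$ by hand as a sanity check before writing the general induction.
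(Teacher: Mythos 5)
Your approach is the same as the paper's: the paper likewise takes the $r=0$ case (the inverse Fourier transform of $(x^2+y^2)^{-s}$, quoted from Watson, p.~172) as the base and then converts the factor $x^r$ into $r$-fold differentiation in $a$, appealing to ``standard properties of the Fourier transformation''; your inductive dominated-convergence argument merely makes that step explicit. The bookkeeping is even a little simpler than you indicate: under the single hypothesis $\Rea s>(r+1)/2$ every dominating function $|x|^k(x^2+y^2)^{-\Rea s}$ with $k\le r$ is already integrable, so the $r$ successive differentiations can all be carried out at the same $s$; no shifted exponents $s'$ are needed.

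The one substantive point is the constant, and there your closing suspicion is correct while your first paragraph's suggestion to ``absorb'' $(2\pi)^{-r}$ is not: that factor cannot be absorbed anywhere. Since $\nte(ax)=e^{2\pi iax}$, one has $x^r\nte(ax)=(2\pi i)^{-r}\partial_a^r\nte(ax)$, so your derivation produces $\Phi(r,a,y,s)=\frac{2\pi^s}{(2\pi i)^r\Gamma(s)}\,y^{\frac12-s}\,\partial_a^r\bigl(|a|^{s-\frac12}K_{s-\frac12}(2\pi|a|y)\bigr)$. The sanity check you proposed settles the matter: for $r=1$, $s=2$, $a,y>0$, a residue computation gives $\Phi(1,a,y,2)=i\pi^2ay^{-1}e^{-2\pi ay}$, whereas the right-hand side of the lemma as displayed evaluates to $2\pi\cdot i\pi^2ay^{-1}e^{-2\pi ay}$. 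Thus the mismatch lies in the printed statement, not in your argument: with the normalization $\mc F^{-1}f(x)=\int_\R f(\xi)e^{2\pi i\xi x}\,d\xi$ the rule used in the paper's proof should read $\mc F^{-1}[x^rf](a)=(2\pi i)^{-r}\partial_a^r\,\mc F^{-1}[f](a)$ rather than $i^{-r}\partial_a^r\,\mc F^{-1}[f](a)$, so for $r\ge1$ the correct prefactor is $\frac{2\pi^s}{(2\pi i)^r\Gamma(s)}$ instead of $\frac{2\pi^s}{i^r\Gamma(s)}$ (and the factor $\frac{2\pi^s}{i^n\Gamma(s)}$ in $\psi_{\mf a,\mf b}$ inherits the same correction). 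Apart from deleting the ``absorb'' remark and recording this corrected constant, your proof is complete and matches the paper's.
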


\begin{proof}
Throughout let 
\[
(\mc F f)(\xi) \sceq \int_\R f(x)e^{-2 \pi i\xi x}dx
\]
and
\[
(\mc F^{-1} f)(x) = \int_\R f(\xi) e^{2 \pi i\xi x}d\xi
\]
denote the Fourier transform and its inverse, respectively. Note that 
\[
 \Phi(0,a,y,s) = \mc F^{-1} \left[ \frac{1}{ (x^2 + y^2)^s} \right](a).
\]
By \cite[p. 172]{Watson44} (note that $a\not=0$), we have
\[
\Phi(0,a,y,s) = 
\frac{2\pi^s|a|^{s-\frac12}}{\Gamma(s)} y^{\frac12-s} K_{s-\frac12}(2\pi |a| y).
\]
Using standard properties of the Fourier transformation it follows that for all $r\in\N_0$ we have
\begin{align*}
\Phi(r,a,y,s) &= \int_{-\infty}^\infty \frac{x^r \nte(a x)}{ (x^2 + y^2)^s} dx 
\\
& = \mc F^{-1} \left[ \frac{x^r}{ (x^2 + y^2)^s} \right] (a)
\\
& =  i^{-r} \frac{\partial^r}{\partial a^r} \mc F^{-1} \left[ \frac{1}{ (x^2 + y^2)^s} \right] (a) 
\\
& = i^{-r} \frac{\partial^r}{\partial a^r} \left(  \frac{2\pi^s|a|^{s-\frac12}}{\Gamma(s)} y^{\frac12-s} K_{s-\frac12}(2\pi |a| y) \right)
\\
& = \frac{2\pi^s}{i^r \Gamma(s)} y^{\frac12-s} \frac{\partial^r}{\partial a^r}\left( |a|^{s-\frac12} K_{s-\frac12}\big(2\pi|a|y\big)\right). \qedhere
\end{align*}
\end{proof}

\bibliography{eisenstein} 
\bibliographystyle{amsplain}

\end{document}